\def\clift#1{#1^{\scriptscriptstyle{\mathrm{C}}}}
\def\vlift#1{#1^{\scriptscriptstyle{\mathrm{V}}}}
\def\fpd#1#2{{\displaystyle\frac{\partial #1}{\partial #2}}}
\def\spd#1#2#3{{\displaystyle\frac{\partial^2 #1}
{\partial #2\partial #3}}}
\def\clift#1{#1^{\scriptscriptstyle{\mathrm{C}}}}
\def\vlift#1{#1^{\scriptscriptstyle{\mathrm{V}}}}
\def\vilms#1{#1^{\scriptscriptstyle{\mathrm{Vilms}}}}
\def\R{\mathbb{R}}
\font\frak=eufm10 scaled\magstep1
\def\goth#1{\hbox{{\frak #1}}}
\def\g{\goth{g}}
\def\la{\g}
\def\cinfty#1{C^{\scriptscriptstyle\infty}(#1)}
\def\vectorfields#1{{\mathcal X}(#1)}
\def\Ad{\mathop{\mathrm{ad}}\nolimits}
\def\ad{\Ad}
\def\sode{{\sc sode}}
\def\T{{\mathbf T}}
\def\v{{\mathsf v}}
\def\w{{\mathsf w}}
\theoremstyle{plain}
\newtheorem{proposition}{Proposition}
\newtheorem{definition}{Definition}
\begin{document}

\title{Nonlinear splittings  on fibre bundles}

\author{ S.\ Hajd\'u$^\dagger$ and T.\ Mestdag$^{\dagger\,\ddagger}$\footnote{Corresponding author} \\[2mm]
	{\small $^\dagger$ Department of Mathematics,  University of Antwerp,}\\
	{\small Middelheimlaan 1, 2020 Antwerpen, Belgium}\\[2mm]
	{\small $^\ddagger$ 
		Department of Mathematics: Analysis, Logic and Discrete Mathematics, Ghent  University}\\
	{\small Krijgslaan 281, 9000 Gent, Belgium}\\[2mm]
	{\small Email: sandor.hajdu@uantwerpen.be, tom.mestdag@uantwerpen.be} 
}

\date{}

\maketitle

\begin{abstract}
We introduce the notion of a nonlinear splitting on a fibre bundle as a generalization of  an Ehresmann connection. We present its basic properties and we pay  attention to the special cases of affine, homogeneous and  principal nonlinear splittings. We explain where nonlinear splittings appear in the context of Lagrangian systems and Finsler geometry and we show their relation to Routh symmetry reduction,  submersive second-order differential equations and unreduction. We define a curvature map for a nonlinear splitting, and we indicate where this concept appears in the context of nonholonomic systems with affine constraints and Lagrangian systems of magnetic type.

\vspace{3mm}

\textbf{Keywords:} nonlinear splittings, Ehresmann connections, fibre-regular Lagrangians, symmetry reduction, submersive SODEs.

\vspace{3mm}

\textbf{2010 Mathematics Subject Classification:} 34A26, 
37J15,
53C05, 
70G65,
70H03. 

\end{abstract}

\section{Introduction}

Principal, linear and nonlinear connections on principal, vector, frame and fibre bundles are among the most indispensable tools of  differential geometry. Moreover, they  have been applied to formulate and solve many problems in dynamical systems and mathematical physics. One may think e.g.\ of the Levi-Civita connection in Riemannian geometry, of the many distinct linear and nonlinear connections in Finsler geometry \cite{Szilasi},  of the  principal connections that describe linear nonholonomic constraints  in geometric mechanics \cite{Bloch} or of the connections that appear in Lagrangian field theories and  gauge theories \cite{Sardan}. 

In this paper we will use the terminology `Ehresmann connection' for a `standard' connection on a fibre bundle $\pi: M\to N$. With that we mean a direct complement $H\pi$ of the vertical distribution $V\pi={\rm Ker}\,T\pi$ within $TM$. It is well-known (see e.g.\ \cite{natural}) that this notion can be equivalently cast in terms of a horizontal lift, which is in essence a splitting of a certain short exact sequence of vector bundles. The goal of this paper is to show that the notion of an Ehresmann connection can be meaningfully extended, if we drop the requirement that the splitting is a linear bundle map. This will give rise to the concept that we have termed `a nonlinear splitting' in this paper. In contrast with an Ehresmann connection, the image of a non-linear splitting is a submanifold of $TM$, and no longer a distribution on $M$. 

A nonlinear splitting should not be confused with the concept that is often called a `nonlinear connection'. A nonlinear connection often refers to a  (standard) Ehresmann connection, in case the fibre bundle is a vector bundle, and the adjective `nonlinear' is usually added  to distinguish Ehresmann connections from linear connections on that vector bundle. For example, the nonlinear connection that can be associated to a \sode\ (see e.g.\ \cite{Szilasi}) is in fact an Ehresmann connection on the vector  bundle $\tau:TM \to M$.

After some preliminaries, we investigate in Section~\ref{sec2} both the similarities and the differences between nonlinear splittings and Ehresmann connections, at the level of their horizontal projections and Vilms lifts. 
As an application, we indicate how nonlinear splittings appear in the context of nonholonomic systems. With an eye on future applications in Finsler geometry, we introduce in Section~\ref{sec4}  the notion of a homogeneous nonlinear splitting, and we prove necessary and sufficient conditions for a nonlinear splitting to be either homogeneous, or an Ehresmann connection.

In Section~\ref{subduced}  we consider Lagrangian systems on $M$, where $M$ is the total manifold of a fibre bundle over $N$, and we show that under the appropriate condition of fibre-regularity one may associate a nonlinear splitting to this Lagrangian system. Under a further symmetry-type condition it can be shown that `horizontal' solutions of the Lagrangian system on $M$ are in fact related to the solutions of a  Lagrangian system on the base manifold $N$ (see Proposition~\ref{symmetryprop}).   The corresponding Lagrangian on $N$ is what we call `the subduced Lagrangian', following \cite{popescu1,popescu2}. We end the section with a discussion on the relation of the proposition to submersive systems of second-order ordinary differential equations \cite{kossowski,willysubmersive}.

In the special case that the fibre bundle is a principal bundle, we give a necessary and sufficient condition for a principal nonlinear splitting to be a principal connection. We show in Section~\ref{secprincipal} how the results of the previous section fit within the context of  reduction of a Lagrangian system with a symmetry Lie group. We discuss some aspects of Lagrange-Poincar\'e reduction \cite{CMR,Inv}, Routh reduction \cite{Bavo,tomsrouth} and unreduction \cite{unreduction}.

In most of the applications where Ehresmann connections are being used, its curvature plays an important role. We give a definition for the curvature of a nonlinear splitting. To demonstrate the significance of this definition to future applications, we show in Section~\ref{seccurvature} where the curvature of an affine nonlinear splitting appears in the geometric modeling of a Lagrangian system with extra magnetic forces and of mechanical systems with affine nonholonomic constraints.   
The paper ends with an outlook to an application of nonlinear splittings in Finsler geometry.

\section{Nonlinear Splittings} \label{sec2}

	Let $\pi: M\rightarrow N$ be a fibre bundle. Throughout we will use $\tau:TM\rightarrow M$ and $\bar\tau:TN\rightarrow N$ for the tangent bundles of $M$ and $N$, respectively. We will consider the pullback of $\bar\tau$ by $\pi$:
	\[
	\pi^*TN=\{(m,v_n)\in M\times TN~|~\pi(m)=\bar\tau(v_n)\},
	\]
and use $p_1$ and $p_2$ for the projections $\pi^*TN \to M$ and $\pi^*TN \to TN$, respectively.	Sections of this pullback bundle  can also be thought of as maps $\eta: M \to TN$ with $\bar\tau \circ \eta = \pi$. In what follows we will call such a section `a vector field on $N$ along $\pi$' and we will denote the set of these sections by $\vectorfields{\pi}$.  

Let $\mu: TM \to \pi^*TN$ be the linear bundle map $(\tau,T\pi)$, i.e.\ $\mu(w_m)= (m, T\pi(w_m))$. 	
	With these ingredients we can write down the short exact sequence	\[
	\begin{tikzcd}
	0\arrow{r} & V\pi\arrow{r}{} & TM\arrow{r}{\mu} & \pi^*TN\arrow{r} & 0,
	\end{tikzcd}
	\]
	where $V\pi$ stands for the vertical bundle ${\rm Ker}\, T\pi$ of $\pi$. Each of the manifolds in this sequence is fibred over $M$. In the following definition, we consider a right splitting of the sequence, but, importantly, we do not assume it to be linear in the fibre coordinates.
	
\begin{definition}
A nonlinear splitting on $\pi:M\rightarrow N$
	 is a map $h:\pi^*TN\rightarrow TM$ which is
	\begin{itemize}
		\item smooth on the slit pullback bundle $\pi^*\mathring{T}N$,
		\item fibre-preserving, i.e.\ $\tau \circ h = p_1$,
		\item satisfies $T \pi \circ h = p_2$.
	\end{itemize}
	We call ${\mathcal H} = {\rm Im}\,h\subset TM$ the horizontal manifold of $h$.
\end{definition}

In the definition, $\mathring{T}N$ stands for the tangent manifold $TN$ from which the zero section has been removed. This subtle aspect will become important when we consider homogeneous nonlinear splittings in Section~\ref{sec4}.

In what follows we will often use coordinates $(x^i)$ on $N$ and  coordinates $(q^a)=(x^i,y^{\alpha})$ on $M$ that are adjusted to the fibre bundle structure of $\pi$. We will denote the corresponding natural fibre coordinates on $TM$ by $(u^a)=(v^i,w^{\alpha})$. Locally, a nonlinear splitting $h$ can then be expressed as
\[
h(x^i,y^{\alpha},v^i)=(x^i,y^{\alpha},v^i,h^{\alpha}(x,y,v)).
\]
We will refer to the functions $h^{\alpha}$ as the `coefficients of $h$'.

The coordinates $v^i$ also represent the fibre coordinates of the vector bundle $\bar\tau: TN \to N$. It is clear from the coordinate expression that if we would require $h$ to be a linear map between the two vector bundles $\pi^*TN$ and $TM$, the  coefficients $h^\alpha$ would be linear in the $v^i$-coordinates. In that case, we would obtain an Ehresmann connection on a fibre bundle (see e.g. \cite{natural}). We will explore this in more detail in the next section, but first we show that much of the apparatus of Ehresmann connections can be  transferred to the current (more general) setting. 

 The {\em horizontal projection operator} of a nonlinear splitting $h:\pi^*TN\rightarrow TM$ is the map $P_h:TM\rightarrow TM, w_m \mapsto h(m,T\pi(w_m))$. In local coordinates, we obtain the expression
\[
P_h(x^i,y^{\alpha},v^i,w^{\alpha})=(x^i,y^{\alpha},v^i,h^{\alpha}(x,y,v)).
\]
The {\em vertical projection operator} is the map $P_v:TM\rightarrow TM$, given by $P_v(w_m) = w_m- h(m,T\pi(w_m))$. It is clear that $P_v(w_m) \in V\pi$  since $T\pi(P_v(w_m)) =T\pi(w_m) -T\pi(w_m)=0$. $P_v$ is the left (nonlinear) splitting of the short exact sequence. It is easy to see that
\[
P_v(x^i,y^{\alpha},v^i,w^{\alpha})=(x^i,y^{\alpha},0,w^{\alpha}-h^{\alpha}(x,y,v)).
\]
With these operators, we can decompose $w_m=P_h(w_m)+P_v(w_m)$ and we have the properties $P_h \circ P_h = P_h$ and $P_v\circ P_h=0$. However,
\[
P_v( P_v(w_m))=P_v(w_m)+P_v(0_m)  \qquad \mbox{and} \qquad P_h( P_v(w_m))= P_h(0_m).
\]
Herein is $P_h(0_m) = -P_v(0_m) = (x^i,y^\alpha,0,h^\alpha(x,y,0))$. Also, notice that the horizontal and vertical projection operators of a nonlinear splitting can not be thought of as (1,1)-tensor fields on $M$, since $h$ is not a fibrewise linear mapping. 

Recall the map  $\mu:TM\to \pi^*{TN}$ that appears in the short exact sequence. Tangent vectors (as elements of $TTM$) belong to ${\rm Ker}\, T\mu$ if they are of the type $W^\alpha\partial/\partial {w^\alpha}\mid_{w_m}$. They can also be interpreted as the $\tau$-vertical lift (see Section~\ref{sec4} for its definition) of the $\pi$-vertical vector $W^\alpha\partial/\partial {y^\alpha}\mid_{m}$ to $w_m$. 

\begin{proposition}
A map $P_h: TM \to TM$ is the horizontal projection operator of a nonlinear splitting if and only if  $\tau\circ P_h = \tau$, $T\pi\circ P_h=T\pi$ and ${\rm Ker}\, T\mu \subset {\rm Ker}\,TP_h$.
\end{proposition}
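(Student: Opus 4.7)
The plan is to exploit the factorization $P_h = h \circ \mu$, which is visible in the preceding local expression for $P_h$, together with the tautological relations $p_1 \circ \mu = \tau$ and $p_2 \circ \mu = T\pi$ that come directly from $\mu(w_m) = (m, T\pi(w_m))$.

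For the ``only if'' direction, $P_h = h \circ \mu$ holds by definition of $P_h$. The first two conditions then follow by composing the defining relations $\tau \circ h = p_1$ and $T\pi \circ h = p_2$ with $\mu$ on the right, while the kernel inclusion $\ker T\mu \subset \ker TP_h$ is just the chain rule $TP_h = Th \circ T\mu$. For the converse, I would define a candidate $h:\pi^*TN \to TM$ by choosing, for each $(m, v_n) \in \pi^*TN$, some $w_m \in T_mM$ with $T\pi(w_m) = v_n$, and setting $h(m, v_n) := P_h(w_m)$. Once well-definedness is established, the three properties of a nonlinear splitting are straightforward: the fibre-preserving condition $\tau \circ h = p_1$ comes from $\tau \circ P_h = \tau$, the identity $T\pi \circ h = p_2$ comes from $T\pi \circ P_h = T\pi$, and local smoothness follows by precomposing $P_h$ with a smooth local section of $\mu$ (in adapted coordinates one may take $(x,y,v) \mapsto (x,y,v,0)$).

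The main obstacle is therefore the well-definedness of $h(m, v_n)$, i.e.\ its independence of the choice of lift $w_m$. The fibre $\mu^{-1}(m, v_n)$ is the affine subspace $w_m + V\pi_m$ of $T_mM$, which is connected, and its tangent space at each of its points coincides with $\ker T\mu$ at that point. The hypothesis $\ker T\mu \subset \ker TP_h$ thus asserts that the restriction of $P_h$ to any fibre of $\mu$ has vanishing derivative everywhere, so this restriction is locally constant and hence, by connectedness of the fibre, constant. This yields the required well-definedness and completes the proof.
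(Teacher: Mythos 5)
Your proof is correct and is essentially the coordinate-free rendering of the paper's argument: the paper works in adapted coordinates, notes that the first two conditions force $P_h:(x^i,y^\alpha,v^i,w^\alpha)\mapsto(x^i,y^\alpha,v^i,h^\alpha(x,y,v,w))$, and reads the kernel condition as $\partial h^\alpha/\partial w^\beta=0$, which is exactly your ``$P_h$ is constant on the connected fibres of $\mu$'' step made explicit. Your version adds a little rigour (the well-definedness of $h$ via connectedness of the affine fibres $w_m+V_m\pi$, and the smoothness of $h$ via a local section of $\mu$) but the key idea is the same.
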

\begin{proof} 
From the first two properties in the statement we see that $P_h$ must be of the type $P_h: (x^i,y^\alpha,v^i,w^\alpha) \mapsto (x^i,y^\alpha,v^i,h^\alpha(x,y,v,w))$. The last property means that $TP_h\mid_{{\rm Ker}\, T\mu}=0$ or $\partial h^\alpha/\partial w^\beta=0$. 
\end{proof}

The map $h$  can be used to lift sections $\eta$ of the pullback bundle $\pi^*TN$ to vector fields $\eta^h$ on $M$. In particular, for a vector field $X$ on $N$ (thought of as a `basic' section of  $\pi^*TN$) its {\em horizontal lift} satisfies
\[
X^h(m) = h(m,X(\pi(m)).
\]
We may also define  the horizontal lift of curves from $N$ to $M$. 

\begin{definition}
Let $h$ be a nonlinear splitting  on $\pi:M\rightarrow N$ and let $c_n(t)$ be a curve on $N$, with $c_n(0)=n$. Its horizontal lift $c^h_m$ to $m\in M_n$ is the unique curve in $M$ such that $\pi\circ c^h_m = c_n$ and ${\dot c}^h_m \in {\mathcal H}$.
\end{definition} 

When $c_n(t)$ is  locally given by $(x^i(t))$, the above horizontal lift is the curve $c^h_m(t)=(x^i(t),y^{\alpha}(t))$ that is determined by the first-order initial value problem
\begin{eqnarray}
\nonumber \dot{y}^{\alpha}(t)&=&h^{\alpha}(x(t),y(t),\dot{x}(t)), \\
\nonumber {y}^{\alpha}(0)&=&y_0^{\alpha},
\end{eqnarray}	
where $(y_0^{\alpha})$ are the fibre coordinates of $m$.

We next show that a construction known as the `Vilms lift of an (Ehresmann) connection' (see e.g.\ \cite{Vilms, unreduction}) can be extended to the current context of nonlinear splittings.  Below, $\sigma: TTM \to TTM$ stands for the canonical involution (see e.g.\ \cite{deleonrodrigues} or \cite{natural}, where it is called the `canonical flip'). In a notation where induced coordinates on $TTM$ are denoted by couples we may write $\sigma: (q^a,u^a,Q^a,U^a) \mapsto (q^a,Q^a,u^a,U^a)$, where, in comparison with earlier notation, $(q^a)=(x^i,y^\alpha)$, $(u^a) = (v^i,w^\alpha)$, etc.

\begin{definition}\label{nonlinearsplitting}
	Let $h$ be a nonlinear splitting on $\pi: M\rightarrow N$ with vertical projection operator $P_v:TM\rightarrow TM$. The unique nonlinear splitting $\vilms{h}$ on $T\pi: TM\rightarrow TN$ whose vertical projection operator is
	\[
	\vilms{P}_v=\sigma\circ TP_v \circ \sigma
	\]
	is called the Vilms lift of $h$.
\end{definition}

The coordinate calculation below shows that this object does indeed satisfy the requirements of a nonlinear splitting.  We will denote the natural induced coordinates on $TTM$  by the tuple $(x^i,y^{\alpha},v^i,w^{\alpha},X^i,Y^{\alpha},V^i,W^{\alpha})$. Then,
\begin{eqnarray*}
\nonumber &&\vilms{P}_v(x^i,y^{\alpha},v^i,w^{\alpha},X^i,Y^{\alpha},V^i,W^{\alpha}) \\
\nonumber &&=\sigma \circ TP_v (x^i,y^{\alpha},X^i,Y^{\alpha},v^i,w^{\alpha},V^i,W^{\alpha})\\
\nonumber &&=\sigma \Big(x^i,y^{\alpha},0, Y^{\alpha}-h^{\alpha}(x,y,X),v^i,w^{\alpha},0, \\&& \hspace*{4cm}  W^{\alpha}-\frac{\partial h^{\alpha}}{\partial x^j}(x,y,X)v^j-\frac{\partial h^{\alpha}}{\partial y^{\beta}}(x,y,X)w^{\beta}-\frac{\partial h^{\alpha}}{\partial v^j}(x,y,X)V^j\Big) \\
\nonumber &&= \Big(x^i,y^{\alpha},v^i,w^{\alpha},0, Y^{\alpha}-h^{\alpha}(x,y,X),0,\\ && \hspace*{4cm} W^{\alpha}-\frac{\partial h^{\alpha}}{\partial x^j}(x,y,X)v^j-\frac{\partial h^{\alpha}}{\partial y^{\beta}}(x,y,X)w^{\beta}-\frac{\partial h^{\alpha}}{\partial v^j}(x,y,X)V^j\Big).
\end{eqnarray*}

One readily verifies that the map $\vilms{h}:(T\pi)^*TN\rightarrow TTM$ is then given by 
\begin{eqnarray*}
\nonumber &&\vilms{h}(x^i,y^{\alpha},v^i,w^{\alpha},X^i,V^i)= \\  && \hspace*{-7mm} (x^i,y^{\alpha},v^i,w^{\alpha}, X^i ,h^{\alpha}(x,y,X), V^i ,\frac{\partial h^{\alpha}}{\partial x^j}(x,y,X)v^j+\frac{\partial h^{\alpha}}{\partial y^{\beta}}(x,y,X)w^{\beta}+\frac{\partial h^{\alpha}}{\partial v^j}(x,y,X)V^j).
\end{eqnarray*}

We can express this a bit more graphically by making use of vector fields an their lifts. The complete and vertical lifts of a horizontal lift by a nonlinear splitting are:
\begin{eqnarray*}
	\nonumber h\left(\frac{\partial}{\partial x^j}\right)&=&\frac{\partial}{\partial x^j}+h^{\alpha}(x,y,e_j)\frac{\partial}{\partial y^{\alpha}}, \\
	\clift{\left(h\left(\frac{\partial}{\partial x^j}\right)\right)}&=&\frac{\partial}{\partial x^j}+h^{\alpha}(x,y,e_j)\frac{\partial}{\partial y^{\alpha}}+\left(\frac{\partial h^{\alpha}}{\partial x^k}v^k+\frac{\partial h^{\alpha}}{\partial y^{\beta}}w^{\beta}\right)\bigg\rvert_{(x,y,e_j)}\frac{\partial}{\partial w^{\alpha}}, \\
	\vlift{\left(h\left(\frac{\partial}{\partial x^j}\right)\right)}&=&\frac{\partial}{\partial v^j}+h^{\alpha}(x,y,e_j)\frac{\partial}{\partial w^{\alpha}}	.
\end{eqnarray*}
The horizontal lifts of the  coordinate vector fields $\frac{\partial}{\partial x^j}$ and $\frac{\partial}{\partial v^j}$ on $TM$ by means of the Vilms nonlinear splitting are:
\begin{eqnarray*}
	\vilms{\frac{\partial}{\partial x^j}}&=&\frac{\partial}{\partial x^j}+h^{\alpha}(x,y,e_j)\frac{\partial}{\partial y^{\alpha}}+\left(\frac{\partial h^{\alpha}}{\partial x^k}v^k+\frac{\partial h^{\alpha}}{\partial y^{\beta}}w^{\beta}\right)\bigg\rvert_{(x,y,e_j)}\frac{\partial}{\partial w^{\alpha}}, \\
	\vilms{\frac{\partial}{\partial v^j}}&=&h^{\alpha}(x,y,0)\frac{\partial}{\partial y^{\alpha}}+\frac{\partial}{\partial v^j}+\left(\frac{\partial h^{\alpha}}{\partial x^k}v^k+\frac{\partial h^{\alpha}}{\partial y^{\beta}}w^{\beta}+\frac{\partial h^{\alpha}}{\partial v^j}\right)\bigg\rvert_{(x,y,0)}\frac{\partial}{\partial w^{\alpha}}.
\end{eqnarray*}
We conclude that for a nonlinear splitting $\left(h\left(\frac{\partial}{\partial x^j}\right)\clift{\right)}=\left(\clift{\frac{\partial}{\partial x^j}}\vilms{\right)}$ but $\left(h\left(\frac{\partial}{\partial x^j}\right)\vlift{\right)}\neq \left(\vlift{\frac{\partial}{\partial x^j}}\vilms{\right)}$.

{\bf An application.} Mechanical systems with rigid bodies are often subjected to nonholonomic constraints. These are nonintegrable constraints that depend on the velocities of the system. They appear, for example, in mechanical systems where wheels are supposed to roll without slipping, or when the system is prohibited from moving in certain directions (such as the motion of a skate). 

The literature on the case  where these nonholonomic constraints are  linear in the velocities is vast. However, many papers (see e.g.\ \cite{Cortes,deleonnonhol,Fasso,Olga,WFD} for a selection) also discuss nonlinear constraints. For simplicity,  we consider here  affine constraints, as in e.g. \cite{BKMM}. In that case, there exist configuration coordinates $(x^i,y^\alpha)$ for the nonholonomic  system, such that  the constraints can be written in the form
\[
{\dot y}^\alpha + A^\alpha_i(x,y) {\dot x}^i = A^\alpha_0(x,y).
\]
The geometric interpretation of such constraints in \cite{BKMM} is as follows: the presence of the coordinates $(x^i,y^\alpha)$ indicate that the configuration manifold of the mechanical system is the total space  of a fibre bundle $\pi: M\to N$. The functions  $A^\alpha_i$ then represent the local coefficients of an Ehresmann connection  on that bundle. Finally 
\[A_0 = A^\alpha_0(x,y)\fpd{}{y^\alpha}
\]
is a given $\pi$-vertical vector field on $M$. 

The point we would like to make is that, in the current set-up, we can interpret the constraints as the submanifold $\mathcal H$ of $TM$ that is the image of the nonlinear, but affine, splitting given by
\[
h (x^i,y^\alpha,v^i) = (x^i,y^\alpha,v^i,w^\alpha = - A^\alpha_i(x,y) v^i + A^\alpha_0(x,y) ).
\]
It is clear that, in general,  when a nonlinear splitting $h: \pi^*TN \to TM$ is an affine map, its linear part $(m,v_n)\mapsto h(m,v_n)-h(m,0_n)$, or 
\[
(x^i,y^\alpha,v^i) \mapsto  (x^i,y^\alpha,v^i,  - A^\alpha_i(x,y) v^i ), 
\]
is an Ehresmann connection on $\pi: M\to N$.
We will come back to the example of affine nonholonomic constraints, and the curvature of such an affine nonlinear splitting in Section~\ref{seccurvature}.

\section{Homogeneous nonlinear splittings and Ehresmann connections} \label{sec4}

In this section we discuss conditions for nonlinear splittings to become Ehresmann connections. We will also deal with a case `in between': that of a homogeneous nonlinear splitting.

\begin{definition}
A nonlinear splitting $h$ on $\pi:M \rightarrow N$ is   homogeneous   if it is positive homogeneous of degree 1, that is, if
	\[
	h(m,\lambda v_n)=	\lambda h(m,v_n), \qquad \forall \lambda \in \R^+.
	\]
\end{definition}

In terms of the local expression, the condition means that the coefficients $h^\alpha$ are $1^+$-homogeneous functions, $
	h^\alpha(x,y,\lambda v)=	\lambda h^\alpha(x,y,v)
$. In e.g.\ \cite{Szilasi} one may find Euler's theorem, which states that this is equivalent with the property
\[
v^i\fpd{h^\alpha}{v^i} = h^\alpha.
\]

 Because of the nonlinear nature of the splitting $h$, the procedure by which we define the horizontal lift of a curve  is not a geometric operation in the following sense:   the image of any of the horizontal lifts of a reparametrized curve might not be the same (viewed as a point set) as the image of the horizontal lift of the curve itself. Homogeneous splittings, however, do exhibit this geometric property.

\begin{proposition} \label{homprop} 
	Let $h$ be a homogeneous splitting on the fibre bundle $\pi:M \rightarrow N$. Then, for any horizontal lift of a curve $c_n(t)$ in $N$ and any positive reparametrization $\tilde{c}(s) = c_n(\theta(s))$ of $c_n(t)$ there exists a horizontal lift of $\tilde{c}(s)$, which has the same image  (as a point set) as the horizontal lift $c^h_m(t)$ of $c(t)$ to $m$.
\end{proposition}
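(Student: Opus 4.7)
The plan is to construct the desired horizontal lift of $\tilde c$ explicitly as the reparametrization of $c^h_m$ by the same function $\theta$, and then to verify that the defining ODE of a horizontal lift is preserved under this operation, the key ingredient being $1^+$-homogeneity of the splitting coefficients $h^\alpha$.

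First I would set $\tilde m := c^h_m(\theta(0))$, which lies in the fibre over $\tilde c(0) = c_n(\theta(0))$, and I would take as candidate
\[
\tilde\gamma(s) := c^h_m(\theta(s)).
\]
Since $\pi\circ c^h_m = c_n$ I get $\pi\circ\tilde\gamma = c_n\circ\theta = \tilde c$. Moreover, because a positive reparametrization means $\theta'>0$, $\theta$ is a strictly increasing diffeomorphism between the two parameter intervals, so the images of $\tilde\gamma$ and $c^h_m$ coincide as point sets.

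Next I would verify that $\tilde\gamma$ satisfies the horizontal-lift ODE for $\tilde c$. In adapted coordinates, writing $c^h_m(t) = (x^i(t),y^\alpha(t))$ and $\tilde\gamma(s) = (x^i(\theta(s)), y^\alpha(\theta(s)))$, the chain rule together with the fact that $c^h_m$ is a horizontal lift gives
\[
\frac{d}{ds}\bigl(y^\alpha(\theta(s))\bigr) = \theta'(s)\,h^\alpha\bigl(x(\theta(s)), y(\theta(s)), \dot x(\theta(s))\bigr).
\]
On the other hand, the ODE that $\tilde\gamma$ must satisfy to be a horizontal lift of $\tilde c$ reads
\[
\frac{d}{ds}\bigl(y^\alpha(\theta(s))\bigr) = h^\alpha\bigl(x(\theta(s)), y(\theta(s)), \theta'(s)\dot x(\theta(s))\bigr),
\]
because $\dot{\tilde x}^i(s) = \theta'(s)\dot x^i(\theta(s))$. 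The two right-hand sides agree by $1^+$-homogeneity of $h^\alpha$ in the last slot, applied with the positive scalar $\lambda = \theta'(s)$. Uniqueness of the solution of the first-order initial-value problem defining the horizontal lift then identifies $\tilde\gamma$ with the horizontal lift of $\tilde c$ through $\tilde m$, completing the argument.

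I do not expect any genuine obstacle: the proof is essentially a chain-rule computation combined with one application of Euler-type homogeneity. The only subtle point worth highlighting is precisely \emph{why} the proposition restricts to positive reparametrizations, namely that the homogeneity hypothesis is $h(m,\lambda v_n)=\lambda h(m,v_n)$ only for $\lambda\in\mathbb{R}^+$, so that the scaling factor $\theta'(s)$ must be positive for the key identification of the two right-hand sides to go through.
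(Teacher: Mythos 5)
Your proposal is correct and follows essentially the same route as the paper: both arguments reduce to the chain rule combined with the $1^+$-homogeneity of the coefficients $h^\alpha$ and the uniqueness of solutions to the first-order initial value problem defining horizontal lifts. The only (cosmetic) difference is the direction of the identification—you show that $c^h_m\circ\theta$ solves the lifting ODE for $\tilde c$, whereas the paper starts from the horizontal lift of $\tilde c$ through $m$ and shows it coincides with $y^\alpha\circ\theta$—and your closing remark on why positivity of $\theta'$ is needed matches the paper's use of positive homogeneity.
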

\begin{proof} Let $m=(y_0^\alpha)$. The horizontal lift $c^h_m(t) = (x^i(t),y^\alpha(t))$ of ${c}_n(t)=(x^i(t))$ can by found by solving the initial value problem 
	\begin{eqnarray}
	\nonumber \dot{y}^{\alpha}(t)&=&h^{\alpha}(x(t),y(t),\dot{x}(t)), \\
	\nonumber {y}^{\alpha}(0)&=&y_0^{\alpha}.
	\end{eqnarray}
We will denote by $s_0$ the parameter value where $\theta(s_0)=0$. We consider the horizontal lift $\tilde{c}^h(s) = ({\tilde x}^i(s),{\tilde y}^\alpha(s))$ of ${\tilde c}(s) = ({\tilde x}^i(s)) = (x^i(\theta(s)))$ that corresponds to the initial value problem
\begin{eqnarray}
\nonumber {\tilde y}^{\alpha}{\,}'(s)&=&h^{\alpha}(\tilde{x}(s),\tilde{y}(s),\tilde{x}'(s)), \\
\nonumber \tilde{y}^{\alpha}(s_0)&=& y_0^{\alpha}.
\end{eqnarray}
Due to the  positive-homogeneity of $h^\alpha$ we see that
\[ \tilde{y}^{\alpha}{\,}'(s) =  h^{\alpha}\big(x(\theta (s)), y(s),{\dot x}(\theta(s))\theta '(s)\big)=\theta '(s)h^{\alpha}\big(x(\theta (s)), y(\theta(s)),{\dot x}(\theta(s))\big) = \theta'(s) {\dot y}^\alpha (\theta(s)), 
\]
which means that $\big(\tilde{y}^{\alpha}(s) - y^{\alpha}(\theta (s))\big)' = 0$. Since they both coincide at $s=s_0$, we get $\tilde{y}^{\alpha}(s) = y^{\alpha}(\theta (s))$, and the statement follows.
\end{proof}

In what follows, we  need the Liouville vector field on $TM$. It can be defined as the map $\Delta: w \mapsto (w,w\vlift)$, where $\vlift. : TM \times_M TM \to TTM$ stands for the vertical lift $(w_1,w_2\vlift)\in T_{w_1}TM$, with \[
(w_1,w_2\vlift)f = \frac{d}{dt}f(w_1+tw_2)\mid_{t=0}.
\] With this, the Liouville vector field becomes in natural coordinates $(q^a,u^a)$ on $TM$, $\Delta = u^a \partial/\partial u^a$. 

The vertical lift can be used to identify the set of vertical vector fields on $TM$ with the set $\vectorfields{\tau}$ of `vector fields along $\tau$'. These are sections of the pullback bundle $\tau^*TM \to TM$, and they can be regarded as maps $\zeta:TM \to TM$ with the property $\tau\circ\zeta = \tau$. We may therefore write them as $\zeta = \zeta^a(q,u)\partial/\partial q^a$. The corresponding vertical vector fields on $TM$,  $\vlift\zeta:w \mapsto (w,\zeta(w)\vlift)$ is then $\vlift\zeta = \zeta^a(q,u)\partial/\partial u^a$. In the special case of the so-called canonical vector field $\T$ along $\tau$ (the map $w\mapsto w$) its vertical lift is the Liouville vector field $\Delta$.

Consider a (general) nonlinear splitting. Similar to the definition of the Liouville vector field, we may introduce two vector fields on $TM$: 
\[
{\Delta}_h: TM \to TTM,\, w\mapsto (w,P_h(w)\vlift) \qquad \mbox{and} \qquad {\Delta}_v: TM \to TTM,\, w\mapsto (w,P_v(w)\vlift).
\]
Then $\Delta=\Delta_h+ \Delta_v$ and
\[
\Delta_h = v^i\fpd{}{v^i} + h^\alpha(x,y,v) \fpd{}{w^\alpha} \qquad \mbox{and} \qquad {\Delta}_v = (w^\alpha - h^\alpha(x,y,v))\fpd{}{w^\alpha}.
\]
An other vector field of interest is $\Delta_0(v) = (v,h(0)\vlift)$, with
\[
\Delta_0 = h^\alpha(x,y,0) \fpd{}{w^\alpha}.
\]

\begin{proposition} \label{Ehresmann1} A  nonlinear splitting $h: \pi^*TN \to TM$ is homogeneous if and only if one of the following equivalent characterizations are satisfied:
\begin{itemize} \item[(1)] $[\Delta,\Delta_h] = 0$.
 \item[(2)] The Liouville vector field $\Delta \in \vectorfields{TM}$ on $M$ is tangent to ${\mathcal H}$.
\end{itemize}
\end{proposition}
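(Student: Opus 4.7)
The plan is to show that both characterizations (1) and (2) reduce, via Euler's theorem, to the identity $v^i\,\partial h^\alpha/\partial v^i = h^\alpha$ that the excerpt already identified as equivalent to positive homogeneity of degree $1$ of the coefficients $h^\alpha$. Throughout I would work in adapted coordinates $(x^i,y^\alpha,v^i,w^\alpha)$ on $TM$, in which
\[
\Delta = v^i\frac{\partial}{\partial v^i} + w^\alpha\frac{\partial}{\partial w^\alpha}, \qquad \Delta_h = v^i\frac{\partial}{\partial v^i} + h^\alpha(x,y,v)\frac{\partial}{\partial w^\alpha}.
\]

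For (1), I would simply compute the bracket $[\Delta,\Delta_h]$ from these coordinate expressions. The $\partial/\partial x^i$ and $\partial/\partial y^\alpha$ components vanish because neither vector field has base components; the $\partial/\partial v^i$ component vanishes because the two fields share the same $v^i$-coefficient. The only non-trivial contribution lies along $\partial/\partial w^\alpha$ and equals $\Delta(h^\alpha) - \Delta_h(w^\alpha) = v^i\,\partial h^\alpha/\partial v^i - h^\alpha$, where I have used that $h^\alpha$ is independent of $w^\beta$. By Euler's theorem, this vanishes everywhere iff $h$ is homogeneous.

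For (2), my preferred route is coordinate-free: the flow of the Liouville field $\Delta$ on $TM$ is the fibrewise scalar multiplication $\phi_t(w_m) = e^{t} w_m$. Tangency of $\Delta$ to $\mathcal{H}$ is therefore equivalent to $\phi_t$-invariance of $\mathcal{H}$, i.e.\ to the statement that $\lambda\,h(m,v_n) \in \mathcal{H}$ for every $h(m,v_n)\in\mathcal{H}$ and every $\lambda>0$. Projecting with $T\pi$ identifies the $TN$-argument of the preimage uniquely as $\lambda v_n$, so invariance amounts to $\lambda\,h(m,v_n) = h(m,\lambda v_n)$ for all $\lambda>0$, which is precisely the definition of homogeneity of $h$. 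As a sanity check one can redo this in coordinates: $\mathcal{H}$ is cut out by $w^\alpha - h^\alpha(x,y,v)=0$, and $\Delta(w^\alpha-h^\alpha)$ restricted to $\mathcal{H}$ equals $h^\alpha - v^i\,\partial h^\alpha/\partial v^i$, recovering the same Euler identity.

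The argument is essentially bookkeeping together with the identification of the flow of $\Delta$, and I do not foresee any real obstacle. The only mild point worth flagging is that $h$ and hence $\Delta_h$ are only smooth on the slit bundle $\pi^*\mathring{T}N$, so all three statements should be read on that domain; but this matches the intended setting of the proposition and is exactly where the flow argument and the coordinate computations make sense.
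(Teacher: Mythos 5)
Your proposal is correct and follows essentially the same route as the paper: part (1) is the identical coordinate computation of $[\Delta,\Delta_h]$ reduced to Euler's identity, and your coordinate ``sanity check'' for part (2) is precisely the paper's argument (applying $\Delta$ to the defining functions $w^\alpha-h^\alpha$ of $\mathcal{H}$). The additional flow-of-$\Delta$ gloss in (2) is a pleasant coordinate-free restatement but does not change the substance.
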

\begin{proof} (1)
One easily verifies that 
\[
[\Delta,\Delta_h] = \left( v^i \fpd{h^\alpha}{v^i} -   h^\alpha \right)\fpd{}{w^\alpha}.
\] As we explained before, we may conclude from
the condition $v^i \fpd{h^\alpha}{v^i} = h^\alpha$ that $h^\alpha$ is a 1-homogeneous function in $v^i$. 

(2) When $\Delta$ is tangent to $\mathcal H$, then $0=\Delta(w^\alpha -h^\alpha) = w^\alpha - v^i \fpd{h^\alpha}{v^i}$, whenever $w^\alpha=h^\alpha$. We obtain again that $v^i \fpd{h^\alpha}{v^i} = h^\alpha$.
\end{proof}

Since $\Delta_v=\Delta- \Delta_h$, we could also have written $[\Delta ,\Delta_v] = 0$ or $[\Delta_h,\Delta_v]=0$ in the first item of the Proposition.

We end this section with a few characterizations of when a nonlinear splitting is an Ehresmann connection, in case the map $h$ is smooth on the whole of $\pi^*TN$. 
\begin{proposition} \label{Ehresmann2}
 A smooth nonlinear splitting $h: \pi^*TN \to TM$ is an Ehresmann connection on $\pi$ if and only if one of the following equivalent characterizations are satisfied:
\begin{itemize} \item[(1)] $[\Delta ,\Delta_h] = 0$.
 \item[(2)] The Liouville vector field $\Delta \in \vectorfields{TM}$ on $M$ is tangent to ${\mathcal H}$.
\item[(3)] The Liouville vector field $\bar\Delta \in \vectorfields{TN}$ on $N$ satisfies $\vilms{\bar\Delta}=\Delta_h$.
\end{itemize}
\end{proposition}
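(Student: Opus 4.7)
The plan is to leverage Proposition~\ref{Ehresmann1}: the equivalence (1) $\Leftrightarrow$ (2) goes through verbatim there, since both conditions translate into $v^i\,\partial h^\alpha/\partial v^i=h^\alpha$, that is, positive $1$-homogeneity of the fibre coefficients of $h$. The new content here is that under the added smoothness of $h$ over the zero section of $\pi^*TN$, this positive homogeneity upgrades to genuine linearity in the $v^i$, which is exactly the defining property of an Ehresmann connection. Concretely, letting $\lambda\to 0^+$ in $h^\alpha(x,y,\lambda v)=\lambda h^\alpha(x,y,v)$ yields $h^\alpha(x,y,0)=0$; writing $h^\alpha(x,y,v)=L^\alpha(x,y;v)+R^\alpha(x,y;v)$ with $L^\alpha$ the degree-one Taylor polynomial at $v=0$ and $R^\alpha(x,y;v)=o(|v|)$, the homogeneity identity rearranges to $h^\alpha(x,y,v)-L^\alpha(x,y;v)=R^\alpha(x,y;\lambda v)/\lambda\to 0$ as $\lambda\to 0^+$, so $h^\alpha=L^\alpha$ is linear in $v$. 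The converse is immediate.

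For the equivalence with (3), a direct coordinate computation suffices. Using the formula for $\vilms{h}$ established right after Definition~\ref{nonlinearsplitting}, and noting that $\bar\Delta=v^i\,\partial/\partial v^i$ on $TN$ corresponds to input coordinates $(X^i,V^i)=(0,v^i)$ for the Vilms splitting, one obtains
\[
\vilms{\bar\Delta}=h^\alpha(x,y,0)\fpd{}{y^\alpha}+v^i\fpd{}{v^i}+\left(\fpd{h^\alpha}{x^j}(x,y,0)\,v^j+\fpd{h^\alpha}{y^\beta}(x,y,0)\,w^\beta+\fpd{h^\alpha}{v^j}(x,y,0)\,v^j\right)\fpd{}{w^\alpha}.
\]
Comparing term by term with $\Delta_h=v^i\,\partial/\partial v^i+h^\alpha(x,y,v)\,\partial/\partial w^\alpha$, the $\partial/\partial y^\alpha$-coefficients force $h^\alpha(x,y,0)=0$; differentiating this identity in $x^j$ and in $y^\beta$ kills the first two summands of the $\partial/\partial w^\alpha$-coefficient, and the equality reduces to $h^\alpha(x,y,v)=(\partial h^\alpha/\partial v^j)(x,y,0)\,v^j$, which is linear in $v$. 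Hence $h$ is an Ehresmann connection, and the converse direction is a routine substitution.

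The only delicate step is the smoothness-plus-positive-homogeneity-implies-linearity argument used to close the loop between (1), (2) and the Ehresmann condition: Proposition~\ref{homprop} and the outlook toward Finsler geometry make clear that, without smoothness at the zero section, there are plenty of genuinely nonlinear homogeneous splittings, so it is really the extended smoothness assumption that is doing the work here.
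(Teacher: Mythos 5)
Your proof is correct and follows essentially the same route as the paper: items (1) and (2) are reduced to the positive $1$-homogeneity condition $v^i\,\partial h^\alpha/\partial v^i=h^\alpha$ from Proposition~\ref{Ehresmann1} and then upgraded to linearity using smoothness at the zero section, while item (3) is handled by the identical coordinate comparison of $\vilms{\bar\Delta}$ with $\Delta_h$. The only difference is that the paper simply cites \cite{Szilasi} for the fact that a $\mathcal{C}^1$, positively $1$-homogeneous function is linear, whereas you supply the short Taylor-expansion argument for it yourself; your version of that argument is sound.
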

\begin{proof} The statements (1) and (2) are a corollary of the previous proposition:  If a function is of class ${\mathcal C}^1$ and positive-homogeneous of degree 1 in $v^i$, then it is a
linear function  in $v^i$ (see \cite{Szilasi}).

(3) Since $\bar\Delta = v^i \partial/\partial v^i$, the expression for $\vilms{\bar\Delta}$ is 
\[
\vilms{\bar\Delta} =  h^\alpha(x,y,0) \fpd{}{y^\alpha}+v^i\fpd{}{v^i} +  \left(\frac{\partial h^{\alpha}}{\partial x^k}v^k+\frac{\partial h^{\alpha}}{\partial y^{\beta}}w^{\beta}+ \frac{\partial h^{\alpha}}{\partial v^j}v^j\right)\bigg\rvert_{(x,y,0)}\frac{\partial}{\partial w^{\alpha}}.
\]
When compared to $\Delta_h$ we get that $ h^\alpha(x,y,0)=0$. From this,   also 
\[
\frac{\partial h^{\alpha}}{\partial x^k}(x,y,0) = \frac{\partial h^{\alpha}}{\partial y^{\beta}}(x,y,0) = 0.
\]
With that, the remaining condition becomes 
\[
\frac{\partial h^{\alpha}}{\partial v^j}(x,y,0)v^j =h^\alpha(x,y,v).
\]
This shows that $h^\alpha$ is a linear function in $v^i$.
\end{proof}

We will come back to homogeneous nonlinear splittings and their appearance in Finsler geometry in Section~\ref{secoutlook}.

\section{Subduced Lagrangians} \label{subduced}

In what follows we will often consider the tangent bundle $\tau: TM \to M$ of a differentiable manifold $M$. Coordinates $(q^a)$ on $M$ induce coordinates $(q^a,u^a)$ on $TM$. We refer to e.g.\ \cite{CrampinPirani,deleonrodrigues} for the definitions and elementary properties of the next few concepts.

A {\em second-order ordinary differential equations field} $\Gamma$ on $M$ (from now on  `a \sode\ on $M$', in short) is a vector field on $TM$ with the property that all its integral curves $\gamma(t)$ in $TM$ are (tangent) lifted curves $\dot c(t)$ of curves $c(t)$ on $M$ (the so-called base integral curves of $\Gamma$). A \sode\ is  locally given by
\[
\Gamma={u}^a\fpd{}{q^a}+f^a\fpd{}{u^a},
\]
from which it is clear that its base integral curves $c(t) = (q^a(t))$ satisfy
\begin{equation} \label{hulpke}
\ddot{q}^a=f^a(q,\dot{q}).
\end{equation}

Our main area of application is in Lagrangian mechanics. The equations of motion are then given by
\[
\frac{d}{dt}\left(\fpd{L}{u^a} \right) -\fpd{L}{q^a}=0.
\]
A Lagrangian function $L\in\cinfty{TM}$ is {\em regular} when its Hessian with respect to fibre coordinates,
\[
\spd{L}{u^a}{u^b}
\]
considered as a symmetric matrix, is everywhere non-singular. In that case, the Euler-Lagrange equations can be rewritten in the form
(\ref{hulpke}), which indicates that the solutions are the base integral curves of a \sode\ $\Gamma_L$ on $M$. In e.g.\ \cite{Inv} it is shown that this so-called {\em Euler-Lagrange vector field} $\Gamma_L$ is a vector field on $TM$ that is completely determined by the fact that it is a \sode\ and  that it satisfies 
\begin{equation}\label{ELsode}
\Gamma_L (\vlift{X}(L)) - \clift{X}(L) = 0,\qquad \forall X \in\vectorfields{M}.
\end{equation}
Here $\clift{X} = X^a \partial/\partial q^a + (\partial X^b/ \partial q^a) \partial/\partial u^a$ and $\vlift{X} = X^a\partial/\partial u^a$ stand, respectively, for the complete lift and vertical lift of a vector field $X = X^a \partial/\partial q^a$ on $M$.

The goal of this section is to study the basic properties of an important subclass of nonlinear splittings.
\begin{definition} Let $L$ be a regular Lagrangian on the total space $M$ of the fibre bundle $\pi: M\rightarrow N$. The Lagrangian is fibre-regular when its Hessian  with respect to fibre coordinates is non-degenerate, that is, when
\[
\det\frac{\partial^2 L}{\partial w^{\alpha}\partial w^{\beta}}\neq 0.
\]
\end{definition}
We may also give an intrinsic definition, in terms of the Legendre transformation ${\rm Leg}: TM \to T^*M, (q^a,u^a) \to (q^a, p_a = \partial L/\partial u^a)$. If we consider the Legendre transformation from the vertical bundle
of $\pi$ on its dual bundle then, one can see that $L$ is fibre-regular if this
transformation is a local diffeomorphism.

When the Lagrangian $L$ is fibre-regular, the Implicit Function Theorem 
guarantees the local existence of a map $h:\pi^*TN\rightarrow TM$, as the solution of
\begin{equation}\label{definingrelation}
	\frac{\partial L}{\partial w^{\alpha}}\circ h=0.
\end{equation}
In the context of Lagrangian systems  we will mainly work with local nonlinear splittings.

\begin{definition} \label{induced} The  nonlinear splitting induced by the fibre-regular Lagrangian $L$ is the map $h:\pi^*TN\rightarrow TM$  determined by the relation (\ref{definingrelation}).
\end{definition}

Any vector field $Y$ on $M$ can be $\tau$-vertically lifted to a vector field $\vlift{Y}$ on $TM$. In case $Y$ is $\pi$-vertical (i.e.\ in case it satisfies $T\pi\circ Y = 0$) its vertical lift $\vlift{Y}$ is a combination of the basis vector fields $\partial/\partial w^\alpha$.  The defining relation of the splitting can then also be written as 
\[
\vlift{Y}(L) \circ h =0, \qquad \forall \mbox{$\pi$-vertical $Y$}.
\]

The following natural question arises:  When are horizontal curves solutions of the Euler-Lagrange equations of $L$? Or, formulated differently: When do solutions, with horizontal initial velocity remain horizontal? 

We recall first an observation about a submanifold $S$ of a manifold $Q$, with inclusion  $\iota: S \to Q$. The following statements can be found in e.g.\ \cite{Lee}. A vector field $X$ on $Q$ is tangent to a submanifold $S$ if and only if $X(f)$ vanishes on $S$, for every function $f$ on $Q$ that vanishes on $S$. If a vector field $X$ is tangent to $S$, then there exist a unique vector field $Y$ on $S$ that is $\iota$-related to $X$. If $S$ is closed the integral curves of $X$ that start in $S$ remain  in $S$. When  $S$ is not closed, the result only holds locally. Indeed, if $x(t)$ is the integral curve of $X$ through $\iota(y_0)$, with $y_0\in S$, then the Picard-Lindel\"of theorem ensures that it must be of the form $(\iota\circ y)(t)$, where $y(t)$ is the integral curve of $Y$ through $y_0$. In the next Proposition, we will apply these statements to the case where $Q=TM$, $S={\mathcal H}$ and $X=\Gamma_L$ is the Euler-Lagrange \sode\, of $L$. For the functions $f$ we take the functions $\partial L/\partial w^\alpha$  that determine the submanifold ${\mathcal H}$. A version of the  next result can also be found in \cite{popescu1}, but in a somewhat different context.

\begin{proposition}\label{symmetryprop}
	Let $h$ be the nonlinear splitting of a fibre-regular Lagrangian $L$. Then, the Euler-Lagrange field $\Gamma_L$ of $L$ is tangent to ${\mathcal H}$ if and only if
	\begin{equation}\label{symmetrycondition}
		\clift{Y}(L)\circ h=0,	
	\end{equation}
for any vector field $Y$ on $M$ that is $\pi$-vertical. Under the assumption that (\ref{symmetrycondition}) is satisfied, the function $\bar L:=L\circ h$ determines a Lagrangian  on $N$. If it is regular, the base integral curves of its Euler-Lagrange field ${\bar\Gamma}_{\bar L}$ on $N$ are   the projections of horizontal base integral curves of $\Gamma_L$.
\end{proposition}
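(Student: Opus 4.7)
The plan is to treat $\mathcal{H}$ as the zero set of the $r$ functions $\partial L/\partial w^\alpha$ (fibre-regularity makes them independent in the $w$-directions), use the Euler--Lagrange identity (\ref{ELsode}) to translate $\Gamma_L$-derivatives of such functions into new derivatives of $L$, and finally chase the chain rule to transfer the Euler--Lagrange equations of $L$ down to those of $\bar L$ along horizontal curves. Throughout I will repeatedly use that $h$ is independent of $w^\alpha$ and that $(\partial L/\partial w^\alpha)\circ h = 0$ by construction.

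For the tangency equivalence, note first that $\Gamma_L$ is tangent to $\mathcal{H}$ if and only if $\Gamma_L(\partial L/\partial w^\alpha)\circ h = 0$ for each $\alpha$. Applying (\ref{ELsode}) to the choice $Y=\partial/\partial y^\alpha$, whose vertical lift is $\partial/\partial w^\alpha$ and whose complete lift is $\partial/\partial y^\alpha$, one obtains $\Gamma_L(\partial L/\partial w^\alpha) = \partial L/\partial y^\alpha$, so tangency is equivalent to $(\partial L/\partial y^\alpha)\circ h = 0$ for all $\alpha$. To match this with (\ref{symmetrycondition}) for a general $\pi$-vertical $Y=Y^\alpha\partial/\partial y^\alpha$, I would expand
\[
\clift{Y}(L) = Y^\alpha\fpd{L}{y^\alpha} + \left(\fpd{Y^\alpha}{x^j}v^j+\fpd{Y^\alpha}{y^\beta}w^\beta\right)\fpd{L}{w^\alpha},
\]
and observe that the second group of terms vanishes on $\mathcal{H}$ by (\ref{definingrelation}); hence $\clift{Y}(L)\circ h = Y^\alpha\cdot(\partial L/\partial y^\alpha)\circ h$, and the equivalence follows by varying the $Y^\alpha$.

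Next I would check that $\bar L := L\circ h$ descends to a function on $TN$. Being the composition with $h:\pi^*TN\to TM$, $\bar L$ is a priori a function of $(x^i,y^\alpha,v^i)$; the chain-rule computation
\[
\fpd{\bar L}{y^\alpha} = \fpd{L}{y^\alpha}\circ h + \left(\fpd{L}{w^\beta}\circ h\right)\fpd{h^\beta}{y^\alpha} = 0,
\]
in which (\ref{symmetrycondition}) kills the first summand and (\ref{definingrelation}) the second, shows that $\bar L$ is independent of $y^\alpha$ and hence descends to $TN$. The same cancellation produces the useful identities $\partial\bar L/\partial v^i = (\partial L/\partial v^i)\circ h$ and $\partial\bar L/\partial x^i = (\partial L/\partial x^i)\circ h$.

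Finally, for a curve $c(t)=(x^i(t),y^\alpha(t))$ on $M$ that is horizontal in the sense that $\dot y^\alpha(t) = h^\alpha(x(t),y(t),\dot x(t))$, one has $\dot c(t) = h(c(t),\dot{\bar c}(t))$ with $\bar c := \pi\circ c$. Evaluating the previous identities along $\dot c(t)$ gives $(\partial\bar L/\partial v^i)(\bar c,\dot{\bar c}) = (\partial L/\partial v^i)(c,\dot c)$ and similarly for $\partial\bar L/\partial x^i$, so the $v^i$-components of the Euler--Lagrange equations of $L$ along $c$ coincide with the Euler--Lagrange equations of $\bar L$ along $\bar c$; the $w^\alpha$-components of the Euler--Lagrange equations of $L$ reduce to $0=0$ on the horizontal curve $c$ by (\ref{symmetrycondition}) and the identical vanishing of $(\partial L/\partial w^\alpha)\circ h$. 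Consequently $c$ is a base integral curve of $\Gamma_L$ if and only if $\bar c$ is a base integral curve of $\bar\Gamma_{\bar L}$, which, combined with the horizontal-lift construction that produces some $c$ from any given $\bar c$, yields the stated correspondence. The main obstacle is the tangency step: one must recognize that the $\partial Y^\alpha/\partial q^a$-terms in $\clift{Y}(L)$ drop out on $\mathcal{H}$ so that the apparently strong condition (\ref{symmetrycondition}) collapses to the pointwise statement $(\partial L/\partial y^\alpha)\circ h = 0$.
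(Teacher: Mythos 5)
Your proof is correct and follows the same overall strategy as the paper's: the tangency equivalence rests on the identity (\ref{ELsode}), the descent of $\bar L$ to $TN$ on the chain-rule cancellation of $\partial\bar L/\partial y^\alpha$, and the final claim on transferring the Euler--Lagrange equations along horizontal curves. Two points where you diverge are worth noting. First, you make explicit a reduction the paper leaves implicit: expanding $\clift{Y}(L)$ for a general $\pi$-vertical $Y$ and discarding the $\fpd{L}{w^\alpha}$-terms via (\ref{definingrelation}) shows that condition (\ref{symmetrycondition}) collapses to the pointwise statement $(\fpd{L}{y^\alpha})\circ h=0$; the paper instead passes directly from (\ref{symmetrycondition}) to $\Gamma_L(\vlift{Y}(L))\circ h=0$ via (\ref{ELsode}) and invokes the tangency criterion for the defining functions $\fpd{L}{w^\alpha}$ of $\mathcal H$. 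Both are valid, and yours clarifies why testing on the coordinate fields $\partial/\partial y^\alpha$ suffices. Second, your final step is a genuine streamlining: the paper expands all second-order derivatives of $\bar L$, substitutes $\ddot y^\alpha$ from the horizontality condition, and matches terms against the Euler--Lagrange expression of $L$; you instead observe that the first-order identities $\fpd{\bar L}{v^i}=\fpd{L}{v^i}\circ h$ and $\fpd{\bar L}{x^i}=\fpd{L}{x^i}\circ h$, evaluated along a horizontal curve where $\dot c=h(c,\dot{\bar c})$, give an equality of functions of $t$ whose time derivatives therefore also agree, so the two Euler--Lagrange expressions coincide identically. This avoids the second-derivative bookkeeping entirely and, together with your remark that the $y^\alpha$-components of the Euler--Lagrange equations of $L$ hold automatically on horizontal curves, yields the full two-way correspondence cleanly. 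The only thing to keep in mind is that the fibre-regularity of $L$ is what guarantees that the functions $\fpd{L}{w^\alpha}$ cut out $\mathcal H$ as a regular level set, so that tangency can indeed be tested on these functions alone; you state this in your opening sentence, which is all that is needed.
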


\begin{proof} In view of equation (\ref{ELsode}), the condition (\ref{symmetrycondition}) is  equivalent with $\Gamma (\vlift{Y}(L)) \circ h=0$, for all $\pi$-vertical $Y$. This  shows that $\Gamma$ is tangent to ${\mathcal H}={\rm Im}\,h$. 

We may therefore conclude that the integral curve $\gamma(t)$ of $\Gamma$ (as a curve in $TM$) that starts at an element $h(m_0,v_0)$ of ${\mathcal H}$ remains in ${\mathcal H}$. Since $\Gamma$ is a \sode\ we know that each integral curve is a lifted curve. For this reason, the integral curves that start at a horizontal element are (tangent) lifts of horizontal lifts, and $\gamma(t)={\dot c}^h_{m_0}(t)$. 

Under the condition (\ref{symmetrycondition}) the function $\bar L =L \circ h$ on $\pi^*TN$ (in principle depending on coordinates $(x^i,y^\alpha,v^i)$) can in fact be thought of as a function on $TN$. Indeed,  we get that
\[
\fpd{\bar L}{y^\alpha}=\fpd{L}{y^\alpha}\circ h + \left(\fpd{L}{w^\beta}\circ h\right) \fpd{h^\beta}{y^\alpha}. 
\]
The first term vanishes because of the condition (\ref{symmetrycondition}) and the second because of the definition of the submanifold (in both cases using $Y=\partial/\partial y^\alpha$). 
This shows, that the composition $L\circ h$ does not depend on the coordinates $y^{\alpha}$ and that it therefore  restricts to a function on $TN$.

For later reference we list all first- and second-order derivatives of $\bar L$:
\begin{eqnarray*}
\frac{\partial \bar L}{\partial x^i}&=&\frac{\partial L}{\partial x^i}\circ h+\left(\frac{\partial L}{\partial w^{\alpha}}\circ h\right)\frac{\partial h^{\alpha}}{\partial x^i}=\frac{\partial L}{\partial x^i}\circ h, \label{firstsetofrelation0}\\
\frac{\partial \bar L}{\partial v^i}&=&\frac{\partial L}{\partial v^i}\circ h+\left(\frac{\partial L}{\partial w^{\alpha}}\circ h\right)\frac{\partial h^{\alpha}}{\partial v^i}=\frac{\partial L}{\partial v^i}\circ h, \label{firstsetofrelation1}\\
\frac{\partial^2 \bar L}{\partial v^i\partial x^j}&=&\frac{\partial ^2 L}{\partial v^i\partial x^j}\circ h+\left(\frac{\partial^2L}{\partial v^i\partial w^{\alpha}}\circ h\right)\frac{\partial h^{\alpha}}{\partial x^j}, 
\\
\frac{\partial^2\bar L}{\partial v^i\partial v^j}&=&\frac{\partial ^2L}{\partial v^i\partial v^j}\circ h+\left(\frac{\partial^2L}{\partial v^i\partial w^{\alpha}}\circ h\right)\frac{\partial h^{\alpha}}{\partial v^j}. 
\end{eqnarray*}
Finally, we remark that 
\[
0= 
\frac{\partial^2\bar L}{\partial v^i\partial y^{\alpha}}=\frac{\partial ^2L}{\partial v^i\partial y^{\alpha}}\circ h+\left(\frac{\partial^2L}{\partial v^i\partial w^{\beta}}\circ h\right)\frac{\partial h^{\beta}}{\partial y^{\alpha}} .
\]
Consider now a base integral curve of $\Gamma$ of the type $c^h_{m_0}(t)=(x^i(t),y^\alpha(t))$. Then ${\dot y}^\alpha = h^\alpha$ and 
\[
{\ddot y}^\alpha = \frac{\partial h^{\alpha}}{\partial x^j} {\dot x}^j    +\frac{\partial h^{\alpha}}{\partial v^j} {\ddot x}^j  +\frac{\partial h^{\alpha}}{\partial y^{\beta}} {\dot y}^\beta.
\]
 Let $n_0=\pi(m_0)$ and $c_{n_0}(t)=\pi(c_{m_0}(t))=(x^i(t))$. We verify that $c_{n_0}$ is a base integral curve of the Euler-Lagrange \sode\ $\bar\Gamma$ of $\bar L$: 
\begin{eqnarray*}
\frac{d}{dt}\left( \fpd{\bar L}{v^i}\right) - \fpd{\bar L}{x^i}&=&\frac{\partial^2 \bar L}{\partial v^i\partial x^j}{\dot x}^j+\frac{\partial^2 \bar L}{\partial v^i\partial v^j}{\ddot x}^j -\frac{\partial \bar L}{\partial x^{i}} \\&=& \left( \frac{\partial^2 L}{\partial v^i\partial x^j}\circ h+\left(\frac{\partial^2\partial L}{\partial v^i\partial w^\alpha}\circ h\right)\frac{\partial h^{\alpha}}{\partial x^j} \right){\dot x}^j \\&&  \hspace*{3.2cm} +      \left( \frac{\partial^2L}{\partial v^i\partial v^j}\circ h+\left(\frac{\partial^2   L}{\partial v^i\partial w^\alpha }\circ h\right)\frac{\partial h^{\alpha}}{\partial v^j} \right) {\ddot x}^j    - \frac{\partial L_1}{\partial x^i}\circ h 
\\&=&  \left( \frac{\partial^2 L}{\partial v^i\partial x^j}  {\dot x}^j +         \frac{\partial^2L}{\partial v^i\partial v^j} {\ddot x}^j +  \frac{\partial^2L}{\partial v^i\partial y^{\alpha}}  {\dot y}^\alpha - \frac{\partial L}{\partial x^i} \right) \circ h 
\\&&\hspace*{3.2cm}  +\left(\frac{\partial^2 L}{\partial v^i\partial w^\alpha}\circ h\right) \left(\frac{\partial h^{\alpha}}{\partial x^j} {\dot x}^j    +\frac{\partial h^{\alpha}}{\partial v^j} {\ddot x}^j  +\frac{\partial h^{\alpha}}{\partial y^{\beta}} {\dot y}^\beta \right)
\\&=&  \left( \frac{\partial ^2 L}{\partial v^i\partial x^j}  {\dot x}^j +         \frac{\partial^2L}{\partial v^i\partial v^j} {\ddot x}^j +  \frac{\partial^2L}{\partial v^i\partial y^{\alpha}}  {\dot y}^\alpha - \frac{\partial L}{\partial x^i} \right) \circ h 
+\left(\frac{\partial^2 L}{\partial w^\alpha \partial v^i}\circ h\right) {\ddot y}^\alpha.
\\&=&\left(\frac{d}{dt}\left( \fpd{L}{v^i}\right) - \fpd{L}{x^i}\right)\circ h = 0. 
\end{eqnarray*}
\end{proof}

\begin{definition}
	Assume that a nonlinear splitting $h$ on $\pi:M\rightarrow N$ is induced by the fibre-regular Lagrangian $L$. If the condition (\ref{symmetrycondition}) of Proposition \ref{symmetryprop} is satisfied, we call the Lagrangian $\bar L$ on $N$ given by $L\circ h$ the  subduced Lagrangian of $L$ through $\pi$. 
\end{definition} 
In what follows we will always assume that the subduced Lagrangian is regular.

{\bf Remark.} 	A smooth map between manifolds  $\pi:M\rightarrow N$  is called a submersion if its tangent map is surjective at any point. 
The following property can be found in \cite{kossowski}:
	{\em Assume that $\pi:M\rightarrow N$ is a surjective submersion and $\Gamma$ is a \sode\, on $M$. If $\Gamma$ is $T\pi$-related to some vector field $\bar\Gamma$ on $N$ (that is, if 	$T(T\pi) \circ \Gamma = \bar\Gamma \circ T\pi$), then $\bar\Gamma$ is also a \sode\, (on $N$).} 
For this reason, a \sode\
 $\Gamma$ is called {\em submersive} if there exists a surjective submersion $\pi:M\rightarrow N$ such that $\Gamma$ is $T\pi$-related to a vector field $\bar\Gamma$ on $N$. Submersive \sode s have been extensively investigated in the literature, see for instance \cite{kossowski,willysubmersive}. If such a surjective submersion exists, $\pi$ can locally be expressed as
$\pi(x^i,y^{\alpha})=(x^i)$
and the base integral curves of $\Gamma$ satisfy
\begin{eqnarray}
	\nonumber \ddot{x}^i&=&f^i(x,\dot{x}) \\
	\nonumber \ddot{y}^{\alpha}&=&f^{\alpha}(x,y,\dot{x},\dot{y}).
\end{eqnarray}
The first set of equations constitutes a decoupled subsystem of $\Gamma$ with fewer variables. This subsystem represents the \sode\ $\bar\Gamma$ and, therefore, each base integral curve of $\Gamma$ projects through $\pi$ to a base integral curve of $\bar\Gamma$. 
 Although Proposition~\ref{symmetryprop} seemingly says something similar, it is, in the current  generality, not true that the Euler-Lagrange field of $L$ is submersive. The reason is that  Proposition~\ref{symmetryprop}   only makes a statement about the projection of horizontal integral curves, and not about the projection of base integral curves in general. In \cite{popescu1,popescu2}, $\pi: M\to N$ is called a Lagrangian submersion if a subduced Lagrangian exists, even though the two Lagrangian \sode s do not submerse in the sense of \cite{kossowski}. In the next sections, we will present   two situations where we can relate properties of nonlinear splittings to submersive \sode s. 

\section{Principal splittings and symmetry reduction of Lagrangian systems} \label{secprincipal}

A nonlinear splitting, induced by a Lagrangian on a fibre bundle does not necessarily give rise to a subduced Lagrangian on the base manifold of the bundle. We will see now that, in the presence of symmetries, the existence of such a subduced function can be guaranteed. The next definition can be thought of as a generalization of a principal connection on a principal bundle.

Let $G$ be a connected Lie group  with Lie algebra $\mathfrak{g}$. Assume that the manifold $M$ comes equipped with a free and proper Lie group action $\Phi:G\times M\rightarrow M$. Then $M$ is the total space of  a principal $G$-bundle $\pi:M\rightarrow N=M/G$. 

The pullback bundle  $\pi^*T(M/G)$ also carries a natural $G$-action $\bar\Phi: G \times \pi^*T(M/G) \to \pi^*T(M/G)$, with  ${\bar\Phi}_g(m,v_n) = (\Phi_g(m),v_n)$. Moreover, the maps $T\Phi_g$ induce a $G$-action on $TM$. We will denote the corresponding principal fibre bundle with $\pi^{TM}: TM \to (TM)/G$.   

The  vertical space of $\pi: M \to M/G$, $V\pi={\rm Ker T\pi} \subset TM$, can be identified with $M\times\mathfrak{g}$ if we make  use of the trivialization $(m,\xi)\mapsto \tilde{\xi}_{\alpha}(m)$. Herein is $\tilde\xi \in  \vectorfields{M}$ the infinitesimal generator of a $\xi \in \mathfrak{g}$.  The action $T\Phi_g$ of $TM$ is then in agreement with the action $g\cdot(m,\xi)= (\Phi_gm,{\rm Ad}_g\xi)$ on $M\times \mathfrak{g}$. For this reason, all manifolds in the short exact sequence
	\[
	\begin{tikzcd}
	0\arrow{r} & M\times\mathfrak{g}\arrow{r}{} & TM\arrow{r}{\mu} & \pi^*T(M/G)\arrow{r} & 0,
	\end{tikzcd}
	\]
have a $G$-action and the sequence itself is $G$-equivariant. We will be interested in {\em equivariant} nonlinear splittings of this sequence.

\begin{definition} A   nonlinear splitting $h:\pi^*T(M/G)\rightarrow TM$ on a principal bundle $\pi:M\rightarrow N=M/G$
 is principal if it is  equivariant: 
\[h({\bar\Phi}_g(m,v_n))=T\Phi_g(h(m,v_n)).
	\]
The (nonlinear) map $\omega:TM\rightarrow\mathfrak{g}$, defined by the requirement 
\[
P_v(w_m)=\widetilde{\omega(w_m)}(m)
\] is the left nonlinear splitting of a principal nonlinear splitting $h$.
\end{definition}
It is easy to see that the equivariance property of $h$ translates to the properties
\[
P_v \circ T\Phi_g  = T\Phi_g \circ P_v \qquad \mbox{and}\qquad \omega \circ T\Phi_g = {\rm Ad}_g \circ \omega 
\]
for $P_v$ and $\omega$. 

If, as before, $\pi:(x^i,y^\alpha) \to (x^i)$ are local adapted coordinates and $\{E_\alpha\}$ is a basis of $\mathfrak{g}$ we can decompose the fundamental vector fields ${\tilde E}_\alpha$ as
\[
{\tilde E}_\alpha = K^\alpha_\beta(x,y) \fpd{}{y^\beta}.
\]
These vector fields form a frame for the set of $\pi$-vertical vector fields on $M$. A local coordinate expression for  $\omega$ is then
\[
\omega(x^i,y^{\alpha},v^i,w^\alpha)=\omega^{\beta}E_{\beta}=(K^{-1})_{\gamma}^{\beta}(w^{\gamma}-h^{\gamma})E_{\beta}.
\]

Since a principal connection is most frequently represented by its left splitting, we  give a characterization of when a nonlinear splitting is a connection, in terms of $\omega$.

\begin{proposition} \label{principal}
A smooth principal nonlinear splitting  is a principal connection on $\pi$ if and only if $i_{\Delta_h} {d}\omega =0$. 
\end{proposition}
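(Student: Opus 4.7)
The plan is to reduce the claim to Proposition~\ref{Ehresmann2} by interpreting $i_{\Delta_h}d\omega$ componentwise. Since the splitting is already equivariant, it will be a principal connection precisely when the underlying map $h$ is an Ehresmann connection, i.e.\ fibrewise linear in the coordinates $v^{i}$. By item (1) of Proposition~\ref{Ehresmann2}, for a smooth $h$ this amounts to $[\Delta,\Delta_h]=0$, and as computed in the proof of Proposition~\ref{Ehresmann1} this in turn is nothing but the scalar identity $v^{i}\partial h^{\alpha}/\partial v^{i}=h^{\alpha}$. The task is therefore to recognise $i_{\Delta_h}d\omega=0$ as precisely this identity.

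In adapted coordinates, the local expression $\omega^{\beta}=(K^{-1})^{\beta}_{\gamma}(w^{\gamma}-h^{\gamma})$ is already at hand, with $K^{\alpha}_{\beta}(x,y)$ depending only on the base coordinates. Since each component $\omega^{\beta}$ is a function on $TM$, I read $i_{\Delta_h}d\omega^{\beta}$ as $\Delta_h(\omega^{\beta})$. A short calculation using $\Delta_h=v^{i}\partial/\partial v^{i}+h^{\alpha}\partial/\partial w^{\alpha}$, together with the facts that $(K^{-1})^{\beta}_{\gamma}$ is annihilated by both $\partial/\partial v^{i}$ and $\partial/\partial w^{\alpha}$ and that the $h^{\gamma}$ do not depend on $w^{\alpha}$, yields
\[
i_{\Delta_h}d\omega^{\beta}=(K^{-1})^{\beta}_{\gamma}\Bigl(h^{\gamma}-v^{i}\fpd{h^{\gamma}}{v^{i}}\Bigr).
\]

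Because the $G$-action is free, the fundamental vector fields $\tilde E_{\alpha}$ form a pointwise frame for $V\pi$, so the matrix $(K^{\alpha}_{\beta})$ is invertible. It follows that $i_{\Delta_h}d\omega=0$ holds if and only if $v^{i}\partial h^{\gamma}/\partial v^{i}=h^{\gamma}$ for every $\gamma$. By the smooth case of Proposition~\ref{Ehresmann2}, this is exactly the linearity condition that makes $h$ an Ehresmann connection, and combined with the standing equivariance assumption it identifies $h$ as a principal connection. I expect the only delicate point to be pinning down what $i_{\Delta_h}d\omega$ means for the $\mathfrak{g}$-valued function $\omega$; once the componentwise interpretation is fixed, the rest of the proof is the short computation above together with the invertibility of $K$.
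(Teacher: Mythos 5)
Your proposal is correct and follows essentially the same route as the paper: both compute $i_{\Delta_h}d\omega$ componentwise from $\omega^{\beta}=(K^{-1})^{\beta}_{\gamma}(w^{\gamma}-h^{\gamma})$ to obtain $(K^{-1})^{\beta}_{\gamma}\bigl(h^{\gamma}-v^{i}\partial h^{\gamma}/\partial v^{i}\bigr)$, and then invoke the smooth case of Proposition~\ref{Ehresmann2} to identify the vanishing of this expression with fibrewise linearity of $h$. Your explicit remarks on the invertibility of $K$ and on equivariance supplying the ``principal'' part are implicit in the paper but worth stating.
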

\begin{proof} In the above statement we consider $\omega = \omega^\alpha E_\alpha$ as a ${\mathfrak g}$-valued function on $TM$. Its exterior derivative is then the $\mathfrak g$-valued one-form on $TM$ with the property that, for $\zeta=\zeta^a \partial/\partial q^a + Z^b \partial /\partial u^b \in \vectorfields{TM}$, 
\[
{d}\omega (\zeta) = \left( \zeta^a\fpd{\omega^\alpha}{q^a} + Z^b\fpd{\omega^\alpha}{u^b}   \right) E_\alpha.
\]
For  $\Delta_h$ we have $\zeta^a = 0$, $Z^i=v^i$ and $Z^\alpha=h^\alpha$. Together with the above expression for $\omega^\alpha$ we obtain
\[
 {d}\omega (\Delta_h) = (K^{-1})^\alpha_\gamma\left( -v^i\fpd{h^\gamma}{v^i} + h^\gamma   \right) E_\alpha.
\]
Therefore, the property $ {d}\omega (\Delta_h)=0$ returns the condition $v^i \fpd{h^\alpha}{v^i} = h^\alpha$ that we have already discussed in the proof of Proposition~\ref{Ehresmann2}.
\end{proof}

 We have assumed that the Lie group $G$ is connected. It is well-known that in that case a function $f$ on $M$ is invariant under the action $\Phi$ if and only if ${\tilde\xi}(f) = 0$, $\forall \xi\in\la$. Likewise, a function $F$ on $TM$ is invariant under the induced $G$-action $T\Phi_g$ on $TM$ if and only if $\clift{\tilde \xi}(F)=0$. There exist analogous characterizations for tensor fields and vector fields. The next proposition gives an infinitesimal characterization of a principal nonlinear splitting. 

\begin{proposition}
A nonlinear splitting $h$ on a principal bundle is principal if and only if  $\Delta_h$ is an invariant vector field on $TM$, i.e.\ $[\clift{\tilde \xi},\Delta_h]=0$, $\forall \xi\in\la$.
\end{proposition}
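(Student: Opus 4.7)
The plan is to prove the equivalence by introducing an intermediate condition: the equivariance of the horizontal projection $P_h$ under the induced action $T\Phi_g$ on $TM$. First, since $\pi \circ \Phi_g = \pi$, we have $T\pi \circ T\Phi_g = T\pi$. Combined with $P_h(w_m) = h(m, T\pi(w_m))$ this gives
\[
P_h(T\Phi_g(w_m)) = h(\Phi_g(m), T\pi(w_m)) = h(\bar\Phi_g(m, T\pi(w_m))),
\]
whereas $T\Phi_g(P_h(w_m)) = T\Phi_g(h(m, T\pi(w_m)))$. Hence $h$ is principal if and only if $T\Phi_g \circ P_h = P_h \circ T\Phi_g$ for every $g \in G$.

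Next, I would translate equivariance of $P_h$ into invariance of the vector field $\Delta_h$. Recall that $\Delta_h(w) = (w, P_h(w)\vlift)$. Since $T\Phi_g$ restricts to a linear isomorphism on each fibre of $\tau:TM\to M$, it intertwines with the vertical lift: for $w, Y$ in a common fibre of $\tau$,
\[
T(T\Phi_g)((w, Y\vlift)) = (T\Phi_g(w), T\Phi_g(Y)\vlift).
\]
Applying this with $Y = P_h(w)$ gives $T(T\Phi_g)(\Delta_h(w)) = (T\Phi_g(w), T\Phi_g(P_h(w))\vlift)$, whereas $\Delta_h(T\Phi_g(w)) = (T\Phi_g(w), P_h(T\Phi_g(w))\vlift)$. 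These coincide for every $w$ precisely when $T\Phi_g \circ P_h = P_h \circ T\Phi_g$. Combined with the first step, $h$ is principal if and only if $\Delta_h$ is invariant under $T\Phi_g$ for every $g \in G$.

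To close the chain of equivalences I invoke the connectedness of $G$. The infinitesimal generator of the lifted $G$-action on $TM$ in the direction $\xi \in \la$ is the complete lift $\clift{\tilde\xi}$ of the fundamental vector field $\tilde\xi$ on $M$. By the characterization of invariance recalled immediately before the proposition, invariance of $\Delta_h$ under all $T\Phi_g$ is equivalent to $[\clift{\tilde\xi}, \Delta_h] = 0$ for every $\xi \in \la$, giving the claim.

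The main obstacle is the bookkeeping surrounding the interplay between $T(T\Phi_g)$ and the vertical lift: one must carefully use that $T\Phi_g$ is fibrewise linear (as the tangent of a diffeomorphism) and identify $\Delta_h$ as a genuine vector field on $TM$ rather than merely a section along $\tau$. Once this identification is in place, everything reduces to a routine unwinding of definitions combined with the connectedness of $G$.
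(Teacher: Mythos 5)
Your proof is correct and follows essentially the same route as the paper: both arguments reduce the statement to the equivariance of $P_h$ under $T\Phi_g$, use the fibrewise linearity of $T\Phi_g$ to show that $T(T\Phi_g)$ intertwines with the vertical lift (so that invariance of $\Delta_h$ is exactly equivariance of $P_h$), and then invoke connectedness of $G$ to pass to the infinitesimal condition $[\clift{\tilde \xi},\Delta_h]=0$. The only difference is organizational: you make explicit, as a separate first step, the equivalence between equivariance of $h$ and of $P_h$, which the paper records in the surrounding text rather than inside the proof.
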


 \begin{proof}
We show first that  $\Delta_h$ is invariant for a principal splitting:
\begin{eqnarray*}
\Delta_h (T\Phi_g (w_m)) &=& ( T\Phi_g (w_m),P_h(T\Phi_g (w_m))  \vlift) = ( T\Phi_g (w_m),T\Phi_g (P_h(w_m))  \vlift) \\&=& TT\Phi_g \big(( w_m, P_h(w_m)\vlift)\big) =   TT\Phi_g(\Delta_h (w_m)).
\end{eqnarray*}
The before last equality follows essentially because $T\Phi_g$ is  a linear bundle map $H: TM \to TM, (q^a,u^a)\mapsto (q^b, H^b_a(q)u^a)$: If we set $w_m=(q^a,u^a)$ and $P_h(w_m)=X_m=(q^a,X^a)$, then 
\[
(H(w_m),H(X_m)\vlift) = H^b_aX^a\fpd{}{u^b}\bigg\rvert_{(q^b,H^b_a u^a)} =TH\left( X^a\fpd{}{u^a} \bigg\rvert_{(q^a,u^a)}  \right) = TH\big((w_m, X_m\vlift)\big). 
\]
 If we run the same steps in the opposite direction we obtain from the invariance of $\Delta_h$ that $P_h(T\Phi_g (w_m)) = T\Phi_g (P_h(w_m)) $.
\end{proof}
Since $[\clift X,\Delta] = 0$ for any vector field $X$ on $M$, the property of the above proposition could also be written as $[\clift{\tilde \xi},\Delta_v]=0$. For later reference we express it in coordinates: If $\tilde\xi = \xi^\gamma K_\gamma^\beta \partial /\partial y^\beta$, then the condition becomes 
\begin{equation} \label{help}
   v^i \fpd{K^\alpha_\gamma}{x^i}(x,y)  + h^\beta(x,y,v)\fpd{K^\alpha_\gamma}{y^\beta}(x,y) -K^\beta_\gamma(x,y)\fpd{h^\alpha}{y^\beta}(x,y,v) =0, \qquad \forall (x,y,v).
\end{equation}

Our main example comes again from Lagrangian mechanics. Assume that a Lagrangian $L$ is invariant under the induced action of the Lie group $G$ on $TM$. If the Lie group is connected, this can be expressed as
\begin{equation} \label{inv}
 \clift{\tilde{\xi}}(L)=0, \qquad \forall \xi \in \mathfrak{g}.
\end{equation}

We recall from \cite{MR,MRS} that the map $J_L:TM\rightarrow \mathfrak{g}^*$ defined as
\[
\langle J_L(w_m),\xi \rangle =\vlift{\tilde{\xi}}(L)(w_m)
\]
is called the momentum map of $L$. For each $w_m\in TM$ we may define the restriction $J_L\mid_{w_m} \la \to \la^*, \xi\mapsto J_L(w_m+{\tilde \xi}(m))$. In \cite{Bavo,Bavo2} the Lagrangian is said to be $G$-regular if $J_L\mid_{w_m}$ is a diffeomorphism for each $w_m\in TM$. Since then
\[
\det \left(  \vlift{\tilde E}_\alpha\vlift{\tilde E}_\beta (L) \right) \neq 0,
\]
this notion coincides, in this case, with that of a fibre-regular Lagrangian from Section~\ref{subduced}. The corresponding nonlinear splitting induced by $L$ is then globally defined.

It is well known that $J$ is equivariant with respect to the action of $G$ on $TM$ and the coadjoint action of $G$ on $\mathfrak{g}^*$ given by
\[
\langle \xi, \ad_g^*\mu\rangle = \langle \ad_g\xi, \mu\rangle.
\]
For a fixed $\mu\in \mathfrak{g}^*$, the level set $N_\mu$ of momentum $\mu$ is invariant under the isotropy subgroup $G_\mu=\{g\in G~|~\ad_g^*\mu=\mu\}$ (see also \cite{MRS,Bavo,tomsrouth}). 

In the current situation of interest, the nonlinear splitting that is induced by $L$ is determined  by the relation $\vlift{\tilde \xi}(L)\circ h=0$. The corresponding submanifold ${\mathcal H} = {\rm Im}\,h$   can therefore be regarded as the level set of zero momentum, $\mu=0$. Since then $G_\mu=G$,  ${\mathcal H}$ is invariant under the whole group action, and as a consequence, the corresponding $h$ is a principal nonlinear splitting. 

\begin{proposition}
	Let $\pi: M\rightarrow M/G$ be a principal bundle and $L$ be a $G$-regular invariant Lagrangian on $M$ and $h$ its principal splitting. Then, the Euler-Lagrange field of $L$ is tangent to the image of $h$ and horizontal solutions (with zero momentum) can be projected to solutions of the Euler-Lagrange equations of $\bar L=L\circ h$.
\end{proposition}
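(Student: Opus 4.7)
The plan is to reduce the proposition to Proposition~\ref{symmetryprop}; the only thing needing verification is the symmetry condition (\ref{symmetrycondition}), that is, $\clift{Y}(L)\circ h = 0$ for every $\pi$-vertical vector field $Y$ on $M$.

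First I would exploit the fact that, because the $G$-action is free, the matrix $(K^\beta_\alpha)$ introduced before the proposition is pointwise invertible, so that the fundamental vector fields $\{\tilde E_\alpha\}$ associated with a basis $\{E_\alpha\}$ of $\mathfrak g$ form a local frame for $V\pi$. Any $\pi$-vertical vector field $Y$ on $M$ then admits a local decomposition $Y = f^\alpha\tilde E_\alpha$ with $f^\alpha\in\cinfty{M}$, and since (\ref{symmetrycondition}) is a pointwise condition it suffices to work with such a local decomposition. Next I would apply the standard identity $\clift{(fX)} = f\,\clift{X} + \clift{f}\,\vlift{X}$, where $\clift{f}\in\cinfty{TM}$ denotes the complete lift of the function $f\in\cinfty{M}$, to obtain
\[
\clift{Y}(L) \;=\; f^\alpha\,\clift{\tilde E_\alpha}(L) \;+\; \clift{f^\alpha}\,\vlift{\tilde E_\alpha}(L).
\]
The $G$-invariance hypothesis (\ref{inv}) will kill the first term identically on $TM$, while the defining relation $\vlift{\tilde\xi}(L)\circ h = 0$ of the principal splitting (specialized to $\xi = E_\alpha$) will kill the second term after composition with $h$. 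This yields $\clift{Y}(L)\circ h = 0$, which is precisely condition (\ref{symmetrycondition}).

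Once that is in place, Proposition~\ref{symmetryprop} delivers the three conclusions in one stroke: $\Gamma_L$ is tangent to $\mathcal{H} = \mathrm{Im}\,h$, the composite $\bar L = L\circ h$ descends to a Lagrangian on $N = M/G$, and the base integral curves of $\bar\Gamma_{\bar L}$ are exactly the $\pi$-projections of the horizontal base integral curves of $\Gamma_L$. The identification ``horizontal $=$ zero momentum'' is then immediate from the definition of $h$: the equations $\vlift{\tilde\xi}(L)\circ h = 0$ for all $\xi\in\mathfrak g$ are just the equations $J_L = 0$, so $\mathcal{H} = J_L^{-1}(0)$.

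The main obstacle, which is more conceptual than technical, is the clean bookkeeping in the central identity: one must recognize that the $G$-invariance of $L$ and the defining relation of $h$ play complementary but distinct roles, the first killing $\clift{\tilde E_\alpha}(L)$ on all of $TM$ and the second killing $\vlift{\tilde E_\alpha}(L)$ only along $\mathcal{H}$. Once that separation has been noted, the proof reduces to a single application of Proposition~\ref{symmetryprop}.
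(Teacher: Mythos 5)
Your proposal is correct and follows the paper's own route: the paper likewise disposes of the statement in one stroke by reducing it to Proposition~\ref{symmetryprop}, observing that the invariance (\ref{inv}) guarantees condition (\ref{symmetrycondition}). Your decomposition $Y=f^\alpha\tilde E_\alpha$ combined with the identity $\clift{(fX)}=f\,\clift{X}+\clift{f}\,\vlift{X}$ simply makes explicit the step the paper leaves implicit (and correctly separates the term killed everywhere by invariance from the term killed only along ${\mathcal H}$ by the defining relation of $h$).
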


\begin{proof} This follows immediately from Proposition~\ref{symmetryprop}, since the presence of symmetries (\ref{inv}) ensures that  condition (\ref{symmetrycondition}) is satisfied on the whole of $TM$. \end{proof}

The subduced Lagrangian $\bar L$ (a function on $T(M/G)$) has an interesting interpretation in the current situation. When the Lagrangian $L$ is $G$-invariant it can be identified with the `reduced Lagrangian' $l$. This is a function on $(TM)/G$ that can be implicitly determined from $L=l\circ \pi^{TM}$, where $\pi^{TM}: TM \to (TM)/G$ is the principal bundle one may associate with the $G$-action on $TM$. 

 In several papers (e.g.\ \cite{CMR,Inv,DMM}) it has been shown that the Euler-Lagrange equations of $L$ can be reduced to the so-called Lagrange-Poincar\'e equations of $l$. These equations are essentially associated to a vector field on $(TM)/G$. The point we would like to make is that $(TM)/G$ is (only) a Lie algebroid (the so-called Atiyah algebroid, see e.g.\ \cite{DMM}), and $(TM)/G$ can not be identified with the tangent manifold $T(M/G)$. Since $l$ is a function on $(TM)/G$, there is no obvious relation between the Euler-Lagrange field of the subduced Lagrangian $\bar L$ (a vector field on $T(M/G)$) and the Lagrange-Poincar\'e field of $l$ (a vector field on $(TM)/G$).

Besides Lagrange-Poincar\'e reduction, there exist, however, a second symmetry reduction method: Routh reduction. 
From the Euler-Lagrange equations of $L$ we know that
\[
\Gamma_L (\vlift{\tilde{E}}_{\alpha}(L))=0.
\]
This shows that the Euler-Lagrange vector field $\Gamma$ of $L$ is tangent to any level set $\vlift{\tilde{E}}_{\alpha}(L)=\mu_{\alpha}$, where  $\mu_{\alpha}$ can now be any arbitrary constants. Routh reduction takes optimal advantage of this observation. For the details we refer the reader to e.g.\ \cite{tomsrouth, Bavo, MRS}. Here it is enough to know that the reduced equations are completely determined by the so-called Routhian function on $N_\mu$ (the level set of momentum, corresponding to $\mu\in {\mathfrak{g}}^*$),
\[
{\mathcal R}^\mu = (L - \mu_\alpha \v^\alpha)\mid_{N_\mu}.
\]
This is a $G_\mu$-invariant function on $N_\mu$ and, for this reason, it can be identified with a function on $N_\mu/G_\mu$.
In this generality, it has been shown that solutions of the Euler-Lagrange equations of $L$ that remain on a specific level set $N_\mu$, can be seen to be solutions of `Routh  equations'  on $N_\mu/G_\mu$. In \cite{tomsrouth} these Routh equations have been computed to be of the form
\[
\frac{d}{dt}\left(\fpd{{\mathcal R}^\mu}{v^i}\right)-\fpd{{\mathcal R}^\mu}{x^i}
=-\mu_\alpha R^\alpha_{ij}v^j - \Lambda^A_i \fpd{{\mathcal R}^\mu}{\theta^A}.
\]
(Here $(x^i,v^i,\theta^A)$ are coordinates on $N_\mu/G_\mu$, where $\theta^A$ stand for coordinates on $G/G_\mu$. The precise meaning of the terms in the righthand side is not of importance to us here.)

In the case of current interest, where $\mu=0$, we immediately see that ${\mathcal R}^0=\bar L$ is our subduced Lagrangian, and that it can be thought of as a function on $N_0/G_0=T(M/G)$. From $G_\mu=G$ it also follows that no $\theta^A$-coordinates appear in the above Routh equations. They therefore simplify, indeed, to the Euler-Lagrange equations of $\bar L$: We conclude that at zero momentum the Routh equations show a variational nature.

We continue our investigation of principal nonlinear splittings. Recall that in Section~\ref{subduced} we have seen that the \sode s $\Gamma_L$ of a Lagrangian $L$ on $M$  and ${\bar\Gamma}_{\bar L}$ of  its subduced Lagrangian $\bar L$ on $N$ are not submersive in the sense of \cite{kossowski}. We will show now that, in the case of a principal splitting on $\pi: M\rightarrow M/G$, it is possible to start with a \sode\ $\bar\Gamma$ on $N=M/G$, and to  construct a \sode\ $\Gamma$ on $M$ that is submersive. In case of Ehresmann connections, this procedure is called  `unreduction' in \cite{unreduction}.  

We have already mentioned that $(TM)/G$ should not be confused with the tangent manifold $T(M/G)$. In fact, $T(M/G)$ is the quotient of $TM$ by the action of $TG$ (which is also a  Lie group): The principal bundle that corresponds to the action $\Psi=T\Phi: TG \times TM \to TM$ has projection $T\pi: TM \to T(M/G)$. We show now that the Vilms lift of a principal nonlinear splitting on $\pi: M \to M/G$ is a principal splitting on the $TG$-principal bundle $T\pi: TM \to T(M/G)$.

\begin{proposition}
The Vilms nonlinear splitting is a principal splitting on $T\pi:TM \to T(M/G)$.
\end{proposition}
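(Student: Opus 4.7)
The proposition asserts that the Vilms lift $\vilms{h}$ of a principal splitting $h$ is again principal, now for the $TG$-action on the bundle $T\pi: TM \to T(M/G)$. My plan is to show that the associated vertical projection $\vilms{P}_v$ satisfies the equivariance $\vilms{P}_v\circ T(T\Phi_{V_g}) = T(T\Phi_{V_g})\circ \vilms{P}_v$ for every $V_g \in TG$. Using the left-trivialization $TG \cong G\ltimes\la$, any such $V_g$ factorizes as $(g,0)\cdot(e,\eta)$ with $T\Phi_{(g,\eta)}(w_m) = T\Phi_g(w_m + \tilde\eta(m))$, so it is enough to check equivariance separately for the two building blocks: the $G$-action $T\Phi_g$ and the additive $\la$-action $T\Phi_{(e,\eta)}:\,w_m\mapsto w_m + \tilde\eta(m)$, which is the time-$1$ flow of $\vlift{\tilde\eta}$.

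For the $G$-part the argument is purely formal. The principal property of $h$ reads $P_v\circ T\Phi_g = T\Phi_g\circ P_v$, so upon taking tangents $TP_v\circ T(T\Phi_g) = T(T\Phi_g)\circ TP_v$. Since $T\Phi_g$ is the tangent lift of the diffeomorphism $\Phi_g: M\to M$, naturality of the canonical involution yields $\sigma\circ T(T\Phi_g) = T(T\Phi_g)\circ\sigma$. Combining these through the identity $\vilms{P}_v = \sigma\circ TP_v\circ\sigma$ from Definition~\ref{nonlinearsplitting} produces the required commutation.

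The $\la$-part is the main obstacle: the translation $T\Phi_{(e,\eta)}$ is not a tangent map of any diffeomorphism of $M$, and a short coordinate check shows that $\sigma$ does not commute with $T(T\Phi_{(e,\eta)})$, so the clean argument above collapses. I would verify the commutation directly in the natural coordinates $(x^i,y^\alpha,v^i,w^\alpha,X^i,Y^\alpha,V^i,W^\alpha)$ on $TTM$, using the explicit formula for $\vilms{P}_v$ from Section~\ref{sec2}. The translation $T(T\Phi_{(e,\eta)})$ only shifts the $w^\alpha$-coordinate by $\eta^\gamma K^\alpha_\gamma(x,y)$ and the $W^\alpha$-coordinate by $\eta^\gamma\bigl(\partial K^\alpha_\gamma/\partial x^j\cdot X^j + \partial K^\alpha_\gamma/\partial y^\beta\cdot Y^\beta\bigr)$. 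Equating the $W^\alpha$-components of the two compositions reduces, after cancellation, to the identity $\eta^\gamma\bigl(X^j\partial K^\alpha_\gamma/\partial x^j + h^\beta(x,y,X)\partial K^\alpha_\gamma/\partial y^\beta - K^\beta_\gamma\, \partial h^\alpha(x,y,X)/\partial y^\beta\bigr) = 0$, which is precisely the infinitesimal principal condition (\ref{help}) for $h$, with the dummy fibre variable $v^i$ replaced by $X^i$. No further ingredient is required.
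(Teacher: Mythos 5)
Your proposal is correct and follows essentially the same route as the paper: both factor the $TG$-action via the Leibniz-type formula $\Psi_{v_g}=T(\Phi_g)\circ d_\xi$, handle the $T\Phi_g$-part formally through the equivariance of $P_v$ and the naturality $\sigma\circ T(T\Phi_g)=T(T\Phi_g)\circ\sigma$, and reduce the translation part by a coordinate computation to the infinitesimal invariance condition (\ref{help}) with $v^i$ replaced by $X^i$. Your explicit observation that $\sigma$ fails to commute with the fibre translation (since it is not a tangent lift) is a point the paper leaves implicit, but the substance of the argument is the same.
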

\begin{proof} Let $v_g\in TG$ and $\xi=g^{-1}v_g \in \la$. The statement will follow if we can prove that, for each $\Psi_{v_g}: TM \to TM, w_m \mapsto T\Phi(v_g,w_m)$, the Vilms vertical projection operator has the property  $\vilms{P}_v \circ T(\Psi_{v_g})= T(\Psi_{v_g}) \circ 	\vilms{P}_v$. 

Remark first that a version of the Leibniz rule says that $T\Phi: TG\times TM \to TM$ satisfies
\[
\Psi_{v_g}(w_m) = T\Phi(v_g,w_m) = T(\Phi_g)(w_m +\tilde{\xi}(m)).
\]
One may find a version of this formula in e.g.\ Theorem~3.5 of \cite{Marrero} (if one takes $TM$ for the Lie algebroid $A$).
If we write $d_\xi: w_m \mapsto w_m +\tilde{\xi}(m)$, then $\Psi_{v_g} = T(\Phi_g)\circ d_\xi$. 
One easily verifies that for any map $F:M\to M$,   $\sigma \circ T(TF) = T(TF) \circ \sigma$ (see also \cite{natural}). With this, and with $T(\Phi_g) \circ  P_v=P_v \circ T(\Phi_g)$, we get
\[
\vilms{P}_v \circ T(\Psi_{v_g})= \sigma \circ TP_v \circ \sigma \circ T(T(\Phi_g)) \circ Td_\xi =  \sigma \circ T(P_v \circ  T(\Phi_g)) \circ \sigma \circ Td_\xi = T(T(\Phi_g))\circ \vilms{P}_v  \circ Td_\xi.
\] 
The result will follow if we can show that $\vilms{P}_v  \circ Td_\xi = Td_\xi \circ \vilms{P}_v$. In local coordinates, we may write
\[
d_\xi (x^i,y^\alpha,v^i,w^\alpha)= (x^i,y^\alpha,v^i,w^\alpha+\xi^\gamma K^\alpha_\gamma).
\]

On the one hand we have that $
(\vilms{P}_v  \circ Td_\xi) (x^i,y^\alpha,v^i,w^\alpha,X^i,Y^\alpha,V^i,W^\alpha)=$
\begin{eqnarray*} &&  \left(x^i,y^\alpha,v^i,w^\alpha+\xi^\gamma K^\alpha_\gamma,0,Y^\alpha-h^\alpha, 0, W^\alpha+\xi^\gamma\left(\fpd{K^\alpha_\gamma}{x^i}X^i  +  \fpd{K^\alpha_\gamma}{y^\beta}Y^\beta\right)\right.
\\ && \hspace*{8cm} \left.
 - \fpd{h^\alpha}{x^i}v^i-\fpd{h^\alpha}{y^\beta}(w^\beta+\xi^\gamma K^\beta_\gamma) -\fpd{h^\alpha}{v^i}V^i\right).
\end{eqnarray*}
On the other hand,
$
(Td_\xi \circ \vilms{P}_v) (x^i,y^\alpha,v^i,w^\alpha,X^i,Y^\alpha,V^i,W^\alpha)=
$
\[
  \left( x^i,y^\alpha,v^i,w^\alpha+\xi^\gamma K^\alpha_\gamma,0,Y^\alpha-h^\alpha, 0, W^\alpha   - \fpd{h^\alpha}{x^i}v^i-\fpd{h^\alpha}{y^\beta}w^\beta -\fpd{h^\alpha}{v^i}V^i  + \xi^\gamma\fpd{K^\alpha_\gamma}{y^\beta} (Y^\beta - h^\beta)    \right).
\]
The difference is, when written in full,
\[
\xi^\gamma\left(X^i \fpd{K^\alpha_\gamma}{x^i}(x,y)  + h^\beta(x,y,X)\fpd{K^\alpha_\gamma}{y^\beta}(x,y) -K^\beta_\gamma(x,y)\fpd{h^\alpha}{y^\beta}(x,y,X) \right).
\]
The factor between brackets vanishes, in view of the condition (\ref{help}) that expresses the invariance of the vector field $\Delta_h$.
\end{proof}

In this specific situation, we  may also define a principal nonlinear splitting on the $G$-principal bundle $TM\to (TM)/G$.
\begin{definition}
	The vertical lift splitting $\vlift{h}$ of a principal nonlinear splitting $h$ is the $G$-principal splitting on the principal bundle $\bar{\pi}:TM\rightarrow TM/G$, whose splitting map is $\tau^*\omega = \omega\circ T\tau$.
\end{definition}

The corresponding splitting is  principal because, for $X\in T_vTM$,  
\[
\omega(T\tau(T(T\Phi_g)X))=\omega(T\Phi_g T\tau(X))={\rm Ad}_{g}\omega(T\tau(X)).
\]
We will denote its corresponding vertical projection operator by $\vlift{P_v}: TTM \to TTM$.
	Let $\Gamma_0 = u^a  \partial/\partial q^a + F^a  \partial/\partial v^a$ be any \sode\ on $M$. Then the vector field
	\[
\Xi=  \vlift{P_v} (\Gamma_0) =  (K^{-1})_\gamma^\beta(w^{\gamma}-h^{\gamma})   \clift{\tilde E}_\beta =  (w^{\gamma}-h^{\gamma}) \left(  \frac{\partial}{\partial y^\delta} + (K^{-1})_\gamma^\beta  {\dot K}^\delta_\beta \frac{\partial}{\partial w^\delta}  \right)
\]
is clearly independent of the specific $F^a$, and therefore of the choice of $\Gamma_0$. Moreover it is tangent to ${\mathcal H}$ and it satisfies $TT\pi\circ \Xi=0$.

\begin{proposition}
	Let $h:M\rightarrow M/G$ be a principal splitting on the principal fibre bundle $\pi: M\rightarrow M/G$ and let $\bar{\Gamma}$ be a {\sode} on $M/G$. Then
	\[\label{unreduced}
		\Gamma = \vilms{\bar{\Gamma}}+\Xi
	\]
	is a {\sode} on $M$ which is tangent to ${\mathcal H}$. Furthermore, $\Gamma$ is submersive and it submerses to $\bar{\Gamma}$ through $\pi$.
\end{proposition}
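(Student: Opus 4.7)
The statement breaks naturally into three parts: (i) $\Gamma$ is a \sode\ on $M$, (ii) $\Gamma$ is tangent to $\mathcal{H}=\mathrm{Im}\,h$, and (iii) $T(T\pi)\circ\Gamma = \bar\Gamma\circ T\pi$. My plan is to address each using the coordinate formulae for $\vilms{h}$ and $\Xi$ already displayed in the excerpt, so the work is essentially bookkeeping.

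For (i) I would use the standard characterization that $X\in\vectorfields{TM}$ is a \sode\ on $M$ iff $T\tau\circ X = \mathrm{id}_{TM}$. Writing $\bar\Gamma = v^i\,\partial/\partial x^i + \bar f^i\,\partial/\partial v^i$, the displayed formula for $\vilms{h}$ shows that the coefficients of $\partial/\partial x^i$ and $\partial/\partial y^\alpha$ in $\vilms{\bar\Gamma}$ are $v^i$ and $h^\alpha(x,y,v)$, so $T\tau\circ\vilms{\bar\Gamma} = P_h$. The coordinate form of $\Xi$ in the excerpt gives the analogous coefficients $0$ and $w^\alpha - h^\alpha$, hence $T\tau\circ\Xi = P_v$. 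Summing yields $T\tau\circ\Gamma = P_h + P_v = \mathrm{id}_{TM}$.

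For (ii) I would apply $\Gamma$ to the defining functions $w^\alpha - h^\alpha(x,y,v)$ of $\mathcal{H}$ and check that the result vanishes on $\mathcal{H}$. Since every term of $\Xi$ carries the factor $(w^\gamma-h^\gamma)$, the field $\Xi$ vanishes identically on $\mathcal{H}$ and is trivially tangent. A short calculation with the Vilms coordinate expression gives
\[
\vilms{\bar\Gamma}(w^\alpha - h^\alpha) = \fpd{h^\alpha}{y^\beta}(x,y,v)\,(w^\beta - h^\beta),
\]
which again vanishes on $\mathcal{H}$, so $\Gamma$ is tangent to $\mathcal{H}$.

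For (iii) I would combine the observation $T(T\pi)\circ\Xi = 0$ noted just before the proposition with the general defining property $T(T\pi)\circ\vilms{h} = p_2$ of a nonlinear splitting on $T\pi:TM\to TN$. The latter translates pointwise to $T(T\pi)\circ\vilms{\bar\Gamma} = \bar\Gamma\circ T\pi$, whence $T(T\pi)\circ\Gamma = \bar\Gamma\circ T\pi$, exactly the relatedness required in the definition of a submersive \sode. The only mildly non-trivial step is the tangency identity in (ii), where the vanishing hinges on the cancellation between the $\partial h^\alpha/\partial x^j\cdot v^j$, $\partial h^\alpha/\partial v^j\cdot\bar f^j$ and $\partial h^\alpha/\partial y^\beta\cdot h^\beta$ contributions dictated by the Vilms formula; everything else is direct substitution.
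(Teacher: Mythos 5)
Your proposal is correct and takes essentially the same route as the paper: the paper's proof likewise writes $\vilms{\bar\Gamma}$ and $\Xi$ in coordinates, observes that their sum has the \sode\ form, that both pieces annihilate the functions $w^\alpha-h^\alpha$ on $\mathcal H$, and that $TT\pi\circ\Gamma=\bar\Gamma\circ T\pi$ can be read off directly. Your reformulations $T\tau\circ\vilms{\bar\Gamma}=P_h$ and $T\tau\circ\Xi=P_v$ are just the intrinsic version of the paper's closing remark that $S(\vilms{\bar\Gamma})=\Delta_h$ and $S(\Xi)=\Delta_v$, so the content is the same.
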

\begin{proof} Assume that $\bar{\Gamma}$ has the local expression $\bar{\Gamma}={v}^i\fpd{}{x^i}+f^i(x,v)\fpd{}{v^i}$. Then, the Vilms-lift of $\bar{\Gamma}$ can, in vector field notation, be locally expressed as
	\[
	\vilms{\bar{\Gamma}} = v^i\fpd{}{x^i}+h^{\alpha}\fpd{}{y^\alpha}+f^i\fpd{}{v^i}+\left(\frac{\partial h^{\alpha}}{\partial x^j}v^j + \frac{\partial h^{\alpha}}{\partial y^{\beta}}w^{\beta} +\frac{\partial h^{\alpha}}{\partial v^j}f^j\right)\fpd{}{w^\alpha}.    
	\]
This vector field is tangent to ${\mathcal H}$ (the submanifold given by $w^\alpha-h^\alpha=0$). However, it is not a {\sode} on $M$, since we have terms $h^{\alpha}\frac{\partial}{\partial y^{\alpha}}$ instead of $w^{\alpha}\frac{\partial}{\partial y^{\alpha}}$. By adding $\Xi$ to $\vilms{\bar{\Gamma}}$, we get the \sode\
	\[
	\Gamma = v^i\fpd{}{x^i}+w^\alpha\fpd{}{y^\alpha}+f^i\fpd{}{v^i}+\left(\frac{\partial h^{\alpha}}{\partial x^j}v^j + \frac{\partial h^{\alpha}}{\partial y^{\beta}}w^{\beta} +\frac{\partial h^{\alpha}}{\partial v^j}f^j+(w^{\gamma}-h^{\gamma})(K^{-1})_\gamma^\beta  {\dot K}^\alpha_\beta\right)\fpd{}{w^\alpha}.
	\]
Since $\Xi$ is also tangent to ${\mathcal H}$, so is $\Gamma$. From the coordinate expression we see that $TT\pi\circ\Gamma =\bar\Gamma \circ T\pi$, which means that $\Gamma$ submerses to $\bar\Gamma$ through $\pi$. \end{proof}

A \sode\ $\Gamma$ can be characterized as a vector field on $TM$ that satisfies $S(\Gamma)=\Delta$, where $S= dq^a \otimes \partial/\partial u^a$ is the {\em vertical endomorphism} on $M$. It is remarkable that $\vilms{\bar{\Gamma}}$ and $\Xi$ are two vector fields on $M$ that satisfy
\[
S(\Xi)=\Delta_v \qquad\mbox{and}\qquad S(\vilms{\bar\Gamma}) = \Delta_h.
\]

\section{Curvature} \label{seccurvature}

In this section we define the curvature map of a nonlinear splitting, by analogy of the curvature tensor of an Ehresmann connection. 
\begin{definition}
	Let $h:\pi^*TN\rightarrow TM$ be a nonlinear splitting on $\pi:M\rightarrow N$. The curvature map is the operation $R:\vectorfields{M}\times \vectorfields{M} \rightarrow \vectorfields{M} $, given by
	\[
R(Z,W)=P_v\left[P_h(Z),P_h(W)\right].
	\]
\end{definition}
The above expression is clearly skew in $Z$ and $W$ and since $T\pi\circ P_v=0$ the result  is always a $\pi$-vertical vector field on $M$. For vector fields $X$ and $Y$ on $N$ we can compute that
\begin{eqnarray*}
R(X^h,Y^h)&=&  
P_v\left[X^h,Y^h\right] = \left[X^h,Y^h\right] - P_h\left[X^h,Y^h\right] \\&=& \left[X^h,Y^h\right] - P_h(\left[X, Y\right]^h) = \left[X^h,Y^h\right] - \left[X, Y\right]^h.  
\end{eqnarray*}
The first equality of the second line follows because  $T\pi\left( \left[X^h,Y^h\right]-[X,Y]^h\right) = [T\pi (X^h),T\pi(Y^h)] -[X,Y] = 0$. Therefore $\left[X^h,Y^h\right]=[X,Y]^h+V$ (for a $\pi$-vertical vector field $V$ on $M$) and thus $P_h\left[X^h,Y^h\right]=P_h([X,Y]^h) = [X,Y]^h$.  

For a $\pi$-vertical vector field $V$ on $M$, it also holds that $R(X^h+V,W) = R(X^h,W)$. For this reason, the curvature map is essentially determined by the association ${\bar R}: \vectorfields{N}\times \vectorfields{N} \to \vectorfields{M}$, given by 
\[
{\bar R}: (X,Y) \mapsto R(X^h,Y^h)=\left[X^h,Y^h\right] - \left[X, Y\right]^h.
\]

We remark that the curvature is not a tensor field. But, we can give an example where we can associate a linear map with it, even though $h$ is not an Ehresmann connection.

	Consider an affine map $h:\pi^*TN\to TM, (x^i,y^\alpha,v^i) \mapsto (x^i,y^\alpha,v^i,w^\alpha=-A^\alpha_iv^i + A^\alpha_0)$. In that case, its linear part, $H:\pi^*TN\to TM, (x^i,y^\alpha,v^i) \mapsto (x^i,y^\alpha,v^i,w^\alpha=-A^\alpha_iv^i)$ defines an Ehresmann connection. For a vector field $X=X^i \partial/\partial x^i$ on $N$, we will denote the horizontal lift by this Ehresmann connection by
	\[
	X^H=X^iH_i  \in \vectorfields{M}, \qquad \mbox{with\,\,\,} H_i=\fpd{}{x^i}-A^\alpha_i\fpd{}{y^\alpha}.
	\]
	Coming back to the affine splitting, its horizontal lift is
\[
	X^h=X^H + A_0.
	\]
With that, we may compute the curvature map as
\[
	{\bar R}(X,Y)= [X^i H_i,Y^j H_j]  + [X^iH_i,A_0] + [A_0,Y^jH_j]  -   [X,Y]^iH_i -A_0.
	\]
The first and the fourth term together represent the curvature of the Ehresmann  connection: If we denote $[H_i,H_j]=B^\alpha_{ij} \partial/\partial y^\alpha$, then these terms together give $X^iY^jB_{ij}^\alpha  \partial/\partial y^\alpha$.

If 
 we set $[H_j,A_0] =A^\alpha_{0j} \partial/\partial y^\alpha$, with 
\[
A^\alpha_{0j} = H_j(A_0^\alpha) + A_0(A^\alpha_j), 
\]
we may compute that\[
 {\bar R}(X,Y)= \left(X^iY^jB^\alpha_{ij} + X^iA^\alpha_{0i} - Y^jA^\alpha_{0j} + A_0^\alpha\right)\fpd{}{y^\alpha}.
\]
 From this expression, it is clear that, although the curvature map  is not tensorial,  in this case is exhibits an affine behaviour. We will use this to define a linear map, in two steps.

First, we may  define the value of the curvature map at a 'point', that is, give a meaning to ${\bar R}_{m}(u_n,v_n)$ with $u_n,v_n\in T_nN$, and $m\in M$ with $\pi(m)=n$:
	\[
{\bar R}_m(u_n,v_n):={\bar R}(X,Y)(m) \in T_mM,
	\]
	with $X,Y$ being arbitrary vector fields on $N$ satisfying $X(n)=u_n$ and $Y(n)=v_n$.
	
In the second step, we define a map ${\bar R}^0: \vectorfields{\bar\tau} \to \vectorfields{\tau}$, as follows. Let $\bar\zeta: TN \to TN$ be a vector field along $\bar\tau: TN \to N$. Then ${\bar R}^0(\zeta): TM \to TM$ is the vector field along $\tau$, defined by
\[
{\bar R}^0(\zeta)(w_m):={\bar R}_{m}(\zeta (T\pi(w_m)) ,T\pi(w_m)) - {\bar R}_{m}(0_n ,T\pi(w_m)).
\] 
With this, we get for $w_m=(x^i,y^\alpha,v^i,w^\alpha)$, $T\pi(w_m)=(x^i,v^i)$  and $\bar\zeta = {\bar\zeta}^i(x,v) \partial / \partial x^i$,
\[
{\bar R}^0(\zeta)= {\bar\zeta}^i\left(v^jB^\alpha_{ij}  +  A^\alpha_{0i} \right)\fpd{}{y^\alpha}.
	\]

We give two occurrences where the above tensorial object ${\bar R}^0$ makes its appearance.

	{\bf Example 1. Nonholonomic systems with affine constraints.} The equations of motion of a nonholonomic system are given by the Lagrange-d'Alembert equations. If $L\in \cinfty{TM}$ is the Lagrangian of the system, then (in the terminology of this paper) we may identify the `constrained Lagrangian' $L_c$ with the composition $L\circ P_h$,  where $P_h$ is the horizontal projector of the affine nonlinear splitting. It is shown in \cite{BKMM} that the Lagrange-d'Alembert equations can then be written as
	\[
	\left\{ \begin{array}{l} {\dot y}^\alpha + A^\alpha_i {\dot x}^i = A^\alpha_0,
 \\[2mm]\displaystyle\frac{d}{dt} \left( \fpd{L_c}{v^i} \right) - \fpd{L_c}{x^i} + A^\alpha_i \fpd{L_c}{y^\alpha} = \left(- B^\alpha_{ij} {\dot x}^j -A^\alpha_{0i} \right) \fpd{L}{w^\alpha}. \end{array} \right.
	\]
In the case where the constraints are linear (i.e.\ when $A^\alpha_0 =0$) the interpretation of the right-hand side is clear: it represents a force term that is determined by the curvature of the Ehresmann connection. From the above considerations it is clear that, also in the case of affine constraints, we can now interpret the right-hand side as a curvature: that of the affine nonlinear splitting.

{\bf Example 2. Magnetic Lagrangian systems.} We come back to the case of a principal bundle $\pi:M\rightarrow M/G$, but with an invariant Lagrangian of the following type
\begin{equation}\label{magnetic}
	L=T-V+A.
\end{equation}
Herein is $T$ the kinetic energy that one can associate with an invariant Riemannian metric. We make the further assumption that the vertical part of the metric (that
is, its restriction to the fibres of $\pi$) comes from a bi-invariant metric on $G$ (or, equivalently  an ${\rm Ad}$-invariant inner product on ${\mathfrak g}$). The potential $V$ is supposed to be an invariant function on $M$ (or: a function on $M/G$)  and the vector potential $A$ (representing magnetic forces) is the linear function that one can associate to an invariant 1-form on $M$.

In order to proceed we need to recall the notion of the mechanical (Ehresmann) connection.  Since the Hessian of $L$ w.r.t.\ fibre coordinates is positive definite, one can define the horizontal subspace of $TM$ as the orthogonal complement of the vertical space w.r.t.\ this Hessian. The corresponding principal connection on $\pi: M \to M/G$ is called the mechanical connection  (for further details, see \cite{Inv}).  

Let $x^i$ be coordinates on $M/G$ and let's now denote the horizontal lift w.r.t.\ the mechanical connection of the corresponding coordinate vector fields by $H_i$.  These are invariant vector fields on $M$.
If we also fix a basis $\{E_{\alpha}\}$ of $\mathfrak{g}$, we may consider  the invariant vector fields $\hat{E}_{\alpha}$ on $M$, generated by these elements, given  by
\[
{\hat E}_\alpha: (x,g) \mapsto \widetilde{(\Ad_{g} E_\alpha)}(x,g) =
T\Phi_g\big(\tilde{E}_\alpha(x,e)\big).
\]
Then, the set $\{H_i,\hat{E}_{\alpha}\}$ constitutes a frame field of $M$, consisting of only $G$-invariant vector fields. For later reference,  we write down the Lie brackets of these vector fields:
\[
[H_i,H_j] = K^\alpha_{ij}{\hat E}_\alpha, \qquad [H_i,{\hat E}_\alpha] = \Upsilon^\beta_{i\alpha}{\hat E}_\beta, \qquad [{\hat E}_\alpha,{\hat E}_\beta] = C^\gamma_{\alpha\beta}{\hat E}_\gamma
\]
Each of these brackets has a geometric interpretation: $K^\alpha_{ij}$ are the coefficients of the curvature of the mechanical connection, $C^\beta_{\alpha\gamma}$ are the structure constants of $\mathfrak{g}$  and  $\Upsilon^\beta_{i\alpha}$ are the coefficients of an adjoint  linear connection (see \cite{CMR}).

We will also   use their corresponding quasi-velocities $(\v^i,\w^{\alpha})$. More precisely, this means that, for any tangent vector $w_m$ in $T_mM$, $\v^i(m)$ and $\w^{\alpha}(m)$ are the components of $w_m$ with respect to the  basis $\{X_i(m),\hat{E}_{\alpha}(m)\}$ of $T_mM$. 
As a matter of fact $\v^i=v^i$.

The Lagrangian in (\ref{magnetic}) has then the following form:
\begin{equation}\label{magneticlagrangianinquasi}
	L=\frac{1}{2}g_{ij}v^iv^j+\frac{1}{2}k_{\alpha\beta}\w^{\alpha}\w^{\beta}-V+A_iv^i+A_{\alpha}\w^{\alpha},
\end{equation}
where, as a result of the assumed invariance conditions, $k_{\alpha\beta}$ are constants, satisfying 
\[
k_{\alpha\delta}C^\delta_{\beta\gamma} + k_{\beta\delta}C^\delta_{\alpha\gamma} = 0,
\]
and $g_{ij}$, $A_i$ and $A_\alpha$ are functions on $M$ that are independent of $y^{\alpha}$ (i.e.\ they are functions on $M/G$). For this reason, we can also interpret $L$ as the reduced Lagrangian $l$ on $(TM)/G$.

Since $g$ is assumed to be positive-definite, the  Lagrangian $L$ is fibre-regular. It therefore generates a non-linear splitting, in the sense of Definition~\ref{induced}. Since $\vlift{\hat E}_\alpha(L)=k_{\alpha\beta}\w^\beta + A_\alpha$, the components of the nonlinear splitting $h$, induced by (\ref{magneticlagrangianinquasi}), can be readily calculated in quasi-velocities to be
	\[
		\w^\alpha = {\sf h}^\alpha= -k^{\alpha\beta}A_{\beta},
	\]
		where $k^{\alpha\beta}$ denotes the matrix inverse of $k_{\alpha\beta}$. This means that the horizontal lift of the nonlinear splitting $h$ is in fact an affine map whose vector part can be related to the horizontal lift of the mechanical connection:
\[	
\left(X^i\fpd{}{x^i} \right)^h = X^iH_i-k^{\alpha\beta}A_{\beta} {\hat E}_\alpha= X^iH_i + A_0
	\]
We may again compute that
\begin{eqnarray*}
\nonumber {\bar R}(X,Y)&=& \left(X^iY^jK^\alpha_{ij} -X^ik^{\alpha\delta}\frac{\partial A_\delta}{\partial x^i}-X^ik^{\gamma\delta}A_\delta\Upsilon^\alpha_{i\gamma}\right.\\
\nonumber &&  \hspace*{5cm} \left.    +Y^jk^{\alpha\delta}\frac{\partial A_\delta}{\partial x^f}+Y^jk^{\gamma\delta}A_\delta\Upsilon^\alpha_{j\gamma} +k^{\alpha\delta}A_\delta \right)\hat{E}_\alpha.
\end{eqnarray*}
and\[
	{\bar R}_0(\zeta)=\zeta^i\left(v^jK^\alpha_{ij} -k^{\alpha\delta}\frac{\partial A_\delta}{\partial x^i}-k^{\gamma\delta}A_\delta\Upsilon^\alpha_{i\gamma}\right)\hat{E}_\alpha.
\]

We can relate the curvature map of the  induced nonlinear splitting to the submersiveness of the underlying {\sode}.
\begin{proposition}\label{cur}
	Let $L$ be an invariant Lagrangian  of the type (\ref{magnetic}) and let $\Gamma_L$ denote its Euler-Lagrange field.  If ${\bar R}_0$ vanishes,  $\Gamma_L$ is submersive through $\pi$ to an Euler-Lagrange \sode\ ${\bar\Gamma}_{\bar L}$ on $M/G$.
\end{proposition}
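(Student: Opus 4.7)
The plan is to work in quasi-velocities adapted to the invariant frame $\{H_i,\hat E_\alpha\}$, write out the equations of motion of $L$ in this frame, and verify that the hypothesis ${\bar R}_0=0$, together with the ${\rm Ad}$-invariance of the fibre metric, kills exactly those source terms that would obstruct $\Gamma_L$ from descending through $T\pi$.

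The preceding proposition already supplies that $\Gamma_L$ is tangent to $\mathcal H$ and that $\bar L=L\circ h$ is a regular Lagrangian on $T(M/G)$, so that every horizontal integral curve of $\Gamma_L$ projects to an integral curve of $\bar\Gamma_{\bar L}$. Consequently the identification problem is automatic: as soon as $\Gamma_L$ is shown to be $T\pi$-related to \emph{some} \sode\ on $M/G$, that \sode\ must coincide with $\bar\Gamma_{\bar L}$, since every point of $T(M/G)$ is of the form $T\pi(w_m)$ for some $w_m\in\mathcal H$. The entire task therefore reduces to showing that the $v^i$-components of $\Gamma_L$, written in quasi-velocities, depend only on $(x,v)$.

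Writing out the equations of motion of $L$ in the frame $\{H_i,\hat E_\alpha\}$, the $y^\alpha$-dependence drops out of the $v^i$-equation immediately, since $g_{ij}, V, A_i, A_\alpha$ are functions on $M/G$. The remaining $\w$-dependence lives in three families of source terms generated by the bracket relations $[H_i,H_j]=K^\alpha_{ij}\hat E_\alpha$ and $[H_i,\hat E_\alpha]=\Upsilon^\beta_{i\alpha}\hat E_\beta$: a $K$-piece proportional to $K^\alpha_{ij}v^j(k_{\alpha\gamma}\w^\gamma+A_\alpha)$; a linear-in-$\w$ piece with coefficient $\partial A_\alpha/\partial x^i - \Upsilon^\gamma_{i\alpha}A_\gamma$; and a quadratic-in-$\w$ piece proportional to $k_{\alpha\gamma}\Upsilon^\alpha_{i\beta}\w^\beta\w^\gamma$. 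The flatness part $K^\alpha_{ij}=0$ of ${\bar R}_0=0$ removes the first; the remaining content of ${\bar R}_0=0$, after lowering its upper $\alpha$-index with $k$, removes the second; and the ${\rm Ad}$-invariance of $k$ produces the skew-symmetry $k_{\alpha\gamma}\Upsilon^\alpha_{i\beta}+k_{\alpha\beta}\Upsilon^\alpha_{i\gamma}=0$, which annihilates the third since $\w^\beta\w^\gamma$ is symmetric.

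I expect the main obstacle to lie in this last paragraph: converting between the $k^{-1}$-raised form of ${\bar R}_0$ used in the preceding example and the $k$-lowered form needed to match the $\w$-linear terms of the equations of motion, and deriving the skew-symmetry of $k_{\alpha\gamma}\Upsilon^\alpha_{i\beta}$ directly from the bi-invariance of the vertical metric, in exact analogy with the identity $k_{\alpha\delta}C^\delta_{\beta\gamma}+k_{\beta\delta}C^\delta_{\alpha\gamma}=0$ already invoked in Section~\ref{secprincipal}.
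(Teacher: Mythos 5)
Your proposal is correct and follows essentially the same route as the paper's proof, which writes the Lagrange--Poincar\'e equations of the reduced Lagrangian in the quasi-velocities $(v^i,\w^\alpha)$ and checks that ${\bar R}_0=0$ (contracted with $k_{\alpha\gamma}$), together with the identity $k_{\beta\gamma}\Upsilon^\beta_{i\alpha}\w^\alpha\w^\gamma=0$ quoted from \cite{Inv}, removes exactly the three families of $\w$-dependent source terms you isolate. The only divergence is at the identification step: where you pin down the submersed \sode\ as $\bar\Gamma_{\bar L}$ abstractly via Proposition~\ref{symmetryprop} and the surjectivity of $T\pi$ restricted to $\mathcal H$, the paper reads off the decoupled $x^i$-equations and recognizes them directly as the Euler--Lagrange equations of $\frac12 g_{ij}v^iv^j-V+A_iv^i$, which under the hypothesis differs from $L\circ h$ only by the term $-\frac12 k^{\alpha\beta}A_\alpha A_\beta$ (constant when ${\bar R}_0=0$), so the two identifications agree.
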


\begin{proof} Since $L$ is invariant its Euler-Lagrange equations can be reduced to the Lagrange-Poincar\'e equations of $l$. These can be written in  quasi-velocities as
\begin{eqnarray*}
&&\frac{d}{dt}\left( \fpd{l}{v^i}\right) -\fpd{l}{x^i} =
(-K^a_{ik}v^k + \Upsilon^a_{ib}\w^b)\fpd{l}{\w^a},\\
&&\frac{d}{dt}\left( \fpd{l}{\w^a}\right) = (\Upsilon^b_{ia}v^i +
C^b_{ac}\w^c)\fpd{l}{\w^b},
\end{eqnarray*}
(see e.g.  \cite{Inv}). In the case of a magnetic Lagrangian (\ref{magneticlagrangianinquasi}), we get
		\begin{eqnarray*}
		\frac{d}{dt}\left( g_{ij}v^j+A_i\right)-\frac{1}{2}\frac{\partial g_{jk}}{\partial x^i}v^jv^k +\fpd{V}{x^i}-\frac{\partial A_k}{\partial x^i}v^k -\frac{\partial A_\alpha}{\partial x^i}\w^{\alpha}&=&\left(-K^\alpha_{ij}v^j+\Upsilon^\beta_{i\alpha}\w^\beta\right) \left(k_{\beta\gamma}\w^\gamma +A_\alpha\right), \label{hor} \\
		\nonumber \frac{d}{dt}\left(k_{\alpha\gamma}\w^\gamma+A_\alpha\right)&=&\left(\Upsilon^\beta_{i\alpha}v^i+C^\beta_{\alpha\gamma}\w^\gamma\right)\left(k_{\beta\gamma}\w^\gamma +A_\beta\right).
	\end{eqnarray*}
One should interpret these equations as the coupled differential equations that determine an initial value problem in the unknown curve $(x^i(t),{\dot x}^i(t),\w^\alpha(t))$ of $(TM)/G$.	In case that the curvature ${\bar R}_0$ vanishes, however, we get that  
	\[
		-K^\alpha_{ij}v^jk_{\alpha\gamma}+\Upsilon^\alpha_{i\gamma}A_\alpha+\frac{\partial A_\gamma}{\partial x^i}=0.
	\]
Since moreover $k_{\beta\gamma}\Upsilon^\beta_{i\alpha}\w^\beta w^\gamma=0$ (see e.g.\ paragraph 6.1 in \cite{Inv}),  the first set of the Lagrange-Poincar\'e equations is independent of the variables $\w^{\alpha}$ and therefore constitute a subsystem on its own. This means that $\Gamma_L$ is submersive. The base integral curves of the corresponding \sode\ $\bar\Gamma$ on $M/G$ are solutions of the differential equations 
\[
		\frac{d}{dt}\left( g_{ij}v^j+A_i\right)-\frac{1}{2}\frac{\partial g_{jk}}{\partial x^i}v^jv^k+\fpd{V}{x^i}-\frac{\partial A_k}{\partial x^i}v^k =0.
	\]
These are the Euler-Lagrange equations of the Lagrangian $\bar L =  \frac{1}{2}g_{ij}v^iv^j -V+A_iv^i$ on $M/G$.
 \end{proof}

If we compare this situation to the more general one that we had discussed in Proposition~\ref{symmetryprop}, we see that now, regardless whether the curve is horizontal or not, we have the property that base integral curves of $\Gamma_L$ project to base integral curves of ${\bar\Gamma}_{\bar L}$.

\section{Outlook} \label{secoutlook}

In the current paper we have discussed fibre-regular Lagrangians $L$ and their nonlinear splittings. It is of interest to consider, roughly speaking, the special case where the Lagrangian is the energy of a Finsler function \cite{BCS2}. A function $F$ on $TM$ is a Finsler function when it is smooth on $\mathring{T}M$, positive, positive homogeneous and has the property that the Hessian of its energy function $E=\frac12 F^2$ with respect to fibre coordinates is a positive-definite matrix everywhere. As a consequence, any submatrix has non-vanishing determinant. Therefore, if $\pi:M \to N$ is a given fibre bundle, $E$ is always fibre regular. For this reason, we may consider the nonlinear splitting of $E$. 


\begin{proposition} If the Lagrangian function $L$ is a $2^+$-homogeneous fibre-regular function on $TM$, then the induced nonlinear splitting is homogeneous.
\end{proposition}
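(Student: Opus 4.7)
The plan is to exploit the fact that homogeneity of $L$ descends to the defining equation $\partial L/\partial w^\alpha \circ h = 0$ of the induced splitting, and then to use the uniqueness built into fibre-regularity (via the Implicit Function Theorem) to transfer this homogeneity to $h$ itself.

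First I would differentiate the $2^+$-homogeneity identity $L(x^i,y^\alpha,\lambda v^i,\lambda w^\alpha)=\lambda^{2} L(x^i,y^\alpha,v^i,w^\alpha)$ with respect to $w^\alpha$. Because the chain rule brings out one factor of $\lambda$ on the left, this yields
\[
\fpd{L}{w^{\alpha}}(x,y,\lambda v,\lambda w)=\lambda\,\fpd{L}{w^{\alpha}}(x,y,v,w),\qquad \lambda>0,
\]
so the fibre-derivatives of $L$ are jointly $1^+$-homogeneous in $(v,w)$.

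Next I would substitute $w=h(x,y,v)$, using the defining relation (\ref{definingrelation}). This gives
\[
\fpd{L}{w^{\alpha}}\bigl(x,y,\lambda v,\lambda h(x,y,v)\bigr) = \lambda\,\fpd{L}{w^{\alpha}}\bigl(x,y,v,h(x,y,v)\bigr) = 0,
\]
which says that the point $\lambda h(x,y,v)$ solves the implicit equation that defines $h$ at the argument $\lambda v$. In other words, both $h(x,y,\lambda v)$ and $\lambda h(x,y,v)$ are solutions of $\partial L/\partial w^\alpha(x,y,\lambda v,\cdot)=0$.

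The final step invokes uniqueness from the Implicit Function Theorem: fibre-regularity of $L$ means exactly that the Hessian $\partial^{2}L/\partial w^{\alpha}\partial w^{\beta}$ is non-singular, so locally there is a unique $w^\alpha$ solving $\partial L/\partial w^{\alpha}=0$ for given $(x,y,v)$. Hence $h(x,y,\lambda v)=\lambda h(x,y,v)$ on the domain of $h$, which by definition makes $h$ a homogeneous nonlinear splitting. The only subtlety I expect is keeping track of the `slit' domain: one must work on $\pi^{*}\mathring{T}N$, where $v\neq 0$ and (by homogeneity) also $\lambda v\neq 0$, so the argument stays inside the domain on which $h$ is smooth and uniqueness applies.
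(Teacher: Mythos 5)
Your proof is correct and follows essentially the same route as the paper: establish that the fibre derivatives $\partial L/\partial w^\alpha$ are $1^+$-homogeneous, observe that both $\lambda h(x,y,v)$ and $h(x,y,\lambda v)$ then solve the defining equation at $\lambda v$, and invoke uniqueness from the Implicit Function Theorem. The only cosmetic difference is that you derive the $1^+$-homogeneity by differentiating the homogeneity identity in coordinates, whereas the paper obtains it intrinsically from $\Delta(L)=2L$ and the bracket identity $[\Delta,\vlift{X}]=-\vlift{X}$.
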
 
\begin{proof} The homogeneity of the Lagrangian can be expressed as $\Delta(L)=2L$, where $\Delta$ is the Liouville vector field on $M$. Since, for each $X\in\vectorfields{M}$, $[\Delta,\vlift X] = -\vlift X$, it is easy to see that $\Delta(\vlift X(L)) = \vlift X(L)$, from which we may conclude that   $\vlift{X}(L)$ is a $1^+$-homogenous function on $TM$. As a consequence, for a $\pi$-vertical vector field $Y$ on $M$, both
\[
\vlift{Y}(L)(x,y,\lambda v,\lambda h(x,y,v)) =\lambda \vlift{Y}(L)(x,y,  v, h(x,y,v)) =0 \,\,\,\mbox{and}\,\,\, \vlift{Y}(L)(x,y,\lambda v, h(x,y,\lambda v))=0. 
\] 
The first item expresses the $1^+$-homogeneity of $\vlift{Y}(L)$, and the second the  definition of the induced nonlinear splitting $h$. Because of the uniqueness in the Implicit Function Theorem, we may conclude that $\lambda h^\alpha(x,y,v)= h^\alpha(x,y,\lambda v)$. \end{proof}

 The energy function $E=\frac12 F^2$ of a Finsler function $F$ is such a $2^+$-homogeneous regular Lagrangian. It is well-known that its Euler-Lagrange \sode\ $\Gamma_E$ is a spray. The Finsler function $F$ itself is a singular Lagrangian, and all `projectively related sprays' of the type $\Gamma_P=\Gamma_E -2 P \Delta$ (with $P$ a $1^+$-homogeneous function on $TM$) satisfy its Euler-Lagrange equations. The geometric interpretation of this property is that the base integral curves of $\Gamma_E$ are the geodesics of the Finsler function that are parametrized by arc length, while those of $\Gamma_P$ can be considered to be reparametrizations.   

Under the assumption that the further condition (\ref{symmetrycondition}) is satisfied, the vector field $\Gamma_E$ is tangent to ${\mathcal H}$ (Proposition~\ref{symmetryprop}). From Proposition~\ref{Ehresmann1} we know that the  homogeneous nonlinear splitting induced by $E$ has also the property that $\Delta$ is tangent to ${\mathcal H}$.  For this reason, also any projectively equivalent spray $\Gamma_P$ shares this property. Moreover, since $h$ is homogeneous, the subduced Lagrangian ${\bar E}$ of $E$ will be a $2^+$-homogeneous function. It would be of interest to know when it represents a Finsler metric. 

For homogeneous connections, we also know that there exist a horizontal lift of a reparametrized curve that remains a solution (Proposition~\ref{homprop}). From all this, we may conclude that the statement in Proposition~\ref{symmetryprop} is `geometric', in the sense that it does not depend on the specific chosen parametrization of a geodesic.
In a next contribution \cite{preprint}, we will investigate these aspects in more detail, both at the level of Finsler manifolds and Minkowski (vector) spaces. We will relate our results on subduced Finsler functions to the notion of a Finsler submersion, as it is called in \cite{Alv,libing}, and apply it to the case of Finsler geometry on homogeneous spaces.

{\bf Acknowledgements.} We are grateful to the referees for their valuable comments on the preprint version of this paper.

{\bf Data sharing statement.} Data sharing not applicable to this article as no datasets were generated or analysed during the current study.

\end{document}